\newcommand{\Z}{\mathbb{Z}}
\newcommand{\N}{\mathbb{N}}
\newcommand{\lang}{\mathcal{L}}
\newcommand{\set}[2]{\left\{ #1 \;\middle|\; #2 \right\}}
\newcommand{\agrid}{A_{\mathrm{grid}}}
\newcommand{\grid}{X_{\mathrm{grid}}}
\newcommand{\xgrid}{x_{\mathrm{grid}}}
\newcommand{\acnt}{A_{\mathrm{count}}}
\newcommand{\cnt}{X_{\mathrm{count}}}
\newcommand{\aprb}{A_{\mathrm{probe}}}
\theoremstyle{plain}
\newtheorem{lemma}{Lemma}
\newtheorem{corollary}[lemma]{Corollary}
\newtheorem{proposition}[lemma]{Proposition}
\newtheorem{theorem}[lemma]{Theorem}
\newtheorem{conjecture}[lemma]{Conjecture}
\newtheorem{question}[lemma]{Question}
\theoremstyle{example}
\theoremstyle{definition}
\newtheorem{definition}[lemma]{Definition}
\newtheorem{example}[lemma]{Example}
\title{On Countable SFT Covers of Sparse Multidimensional Shift Spaces}
\author{Ilkka Törmä\footnote{Author was supported by the Academy of Finland under grant 346566.} \\ Department of Mathematics and Statistics \\ University of Turku, Turku, Finland \\ \texttt{iatorm@utu.fi}}
\begin{document}

\maketitle

\begin{abstract}
    A multidimensional sofic shift is called countably covered if it has an SFT cover containing only countably many configurations.
    In contrast to the one-dimensional setting, not all countable sofic shifts are countably covered.
    We investigate the existence of countable covers for gap width shifts, where the number of nonzero symbols in a configuration is bounded by a function of the minimum distance between two such symbols.
    As our main results, we characterize those one-dimensional gap width shifts whose two-dimensional lift is a countably covered sofic shift, and show that a large class of two-dimensional gap width shifts are countably covered.
\end{abstract}

\section{Introduction}

A multidimensional shift of finite type (SFT) is a set of colorings of $\Z^d$ using finitely many colors that is defined by a finite number of local rules.
A sofic shift is the image of an SFT, called its cover, under a map that identifies some of the colors with each other.
We study the following problem: given a two-dimensional sofic shift whose cardinality is countable, when does it admit an SFT cover that is also countable?
More generally, we may consider the problem of finding countable-to-one covers for arbitrary sofic shifts.

We concentrate on the case $d = 2$ and the case where the sofic shift is in some sense ``sparse'', that is, the alphabet contains $0$ and each configuration contains few nonzero symbols: either their number is finite, or they lie on finitely many columns.
In the latter case we also constrain each column to be uniform, so that each configuration is $(0,1)$-periodic and the number of configurations is necessarily countable.

The existence of a global periodic direction is the setting in which the author previously proved in \cite{To20} that not all countable sofic shifts have countable SFT covers.
The author proved a necessary condition and showed it to be sufficient it in a restricted subclass of sofic shifts.
In this article we prove the same statement in another subclass: those in which all but finitely many columns have the same ``zero'' color, and the number of nonzero columns is restricted by a computable function of the size of the smallest gap between two nonzero columns.
In particular, we can prove that not all sofic shifts of this form admit countable SFT covers.

A two-dimensional shift space with $(0,1)$ as a global period is the \emph{lift} of a one-dimensional shift space.
It is a well known result of Aubrun-Sablik \cite{AuSa13} and Durand-Romashchenko-Shen \cite{DuRoSh12} that the $\Z^2$-lift of a $\Z$-shift space is sofic, i.e.\ has an SFT cover, if and only if it is effectively closed.
In \cite{DuRo21}, Durand and Romashchenko further prove that if the $\Z$-shift space is quasiperiodic, then the SFT cover can be taken quasiperiodic as well, and the same holds for minimality.
One can say that quasiperiodicity and minimality are properties that can always be lifted to the SFT cover of a vertically constant sofic shift.
Countability, as it turns out, cannot always be lifted in this way.

On the other hand, this study is also related to the \emph{equal entropy problem} of whether every $\Z^2$-sofic shift has an SFT cover of equal topological entropy.
Desai showed in \cite{De06} that there always exist covers whose entropy is arbitrarily close to that of the sofic shift, but reaching the exact value remains an open question.
In particular, it is not known whether every zero entropy sofic shift admits a zero entropy SFT cover, that is, whether having zero entropy is a liftable property.
Countability, like having zero entropy, is a ``smallness property'' of shift spaces, whose liftability we investigate.

Our second main result concerns truly sparse two-dimensional shift spaces, those for which the total number of nonzero symbols is bounded by a function of the minimum distance between two such symbols.
We show that if there are no other constraints, and the function is upper semicomputable and bounded from above by $2^{\sqrt{n}}$, then the resulting shift space is a countably covered sofic shift.
We do not provide examples of sparse two-dimensional sofic shifts that are not countably covered, as there are no known techniques for proving this property in the absence of a periodic direction.

These results are proved using a general toolbox of constructions for implementing computations and geometric or combinatorial constraints in countable shifts of finite type.
We have presented them in a modular form for ease of reuse in other constructions of countable SFTs.
Some of them may also be useful in constructing countable-to-one covers for general sofic shifts, although this is beyond the scope of this article.

The two main results, despite their seeming similarity, require subtly different approaches.
The existence of a periodic direction (or rather, only having one non-periodic dimension) is both a blessing and a curse.
On one hand, every finite pattern occurs in infinitely many positions, which allows us to ignore any given finite region of the configuration and check its contents somewhere else.
On the other hand, every periodic configuration must have a periodic preimage in the SFT cover \cite[Proposition~1]{To20}, so we cannot enforce infinite computations in the cover.
In both cases, the requirement of countability means that we cannot employ any of the known constructions of aperiodic SFTs, such as Robinson tiles~\cite{Ro71} or fixed-point tiles~\cite{DuRoSh12}, except in carefully controlled ways like restricting them to a finite region.

This article is dedicated to prof.\ Jarkko Kari, my former PhD advisor, on his 60th birthday.
I am thankful to him for many years of collaboration and friendship.
I am also thankful to the anonymous referee for their comments that helped significantly improve this article.

\section{Preliminaries}

\subsection{Definitions}

We recall some basic notions of multidimensional symbolic dynamics.
Let $A$ be a finite alphabet containing $0$ and $d \geq 1$ a dimension.
Denote $A_0 = A \setminus \{0\}$.
The $d$-dimensional \emph{full shift} over $A$ is the set $A^{\Z^d}$, whose element are called \emph{configurations}.
It is equipped with the \emph{shift action} of $\Z^d$, defined by $\sigma^{\vec v}(x)_{\vec n} = x_{\vec n + \vec v}$ for $x \in A^{\Z^d}$ and $\vec v, \vec n \in \Z^d$.
A $d$-dimensional \emph{pattern} with finite \emph{domain} $D \subset \Z^d$ is a map $P : D \to A$.
Each pattern defines a \emph{cylinder set} $[P] = \{ x \in A^{\Z^d} \mid x|_D = P \}$, and these sets form a basis for the profinite topology.
For a pattern $P$ over $A$, denote by $|P|_{\neq 0}$ the total number of nonzero symbols in $P$.
A topologically closed and shift-invariant subset of $A^{\Z^d}$ is called a \emph{shift space}.
If $X \subseteq Y$ are shift spaces, we say $X$ is a \emph{subshift} of $Y$.

Each shift space $X \subseteq A^{\Z^d}$ is defined by a set $F$ of \emph{forbidden patterns}, in the sense that $X = \bigcap_{P \in F} \bigcap_{\vec v \in \Z^d} \sigma^v (A^{\Z^d} \setminus [P])$.
If $F$ can be chosen finite, then $X$ is a \emph{shift of finite type}, or SFT; if $F$ can be chosen computably enumerable, then $X$ is \emph{effectively closed}.
If $X$ is a shift space and $\pi : A \to B$ is any map to another alphabet $B$, the coordinate-wise projection $\pi(X) \subseteq B^{\Z^d}$ is also a shift space.
If $X$ is an SFT, then $\pi(X)$ is a \emph{sofic shift}, and the pair $(X, \pi)$ is its \emph{SFT cover}.
It is well known that all sofic shifts are effectively closed, and that the converse does not hold when $d \geq 2$.

The \emph{language} $\lang(X)$ of a subshift $X \subseteq A^{\Z^d}$ is the set of finite patterns $P$ over $A$ such that $X \cap [P] \neq \emptyset$.
If $X$ is effectively closed, then $\lang(X)$ is a co-computably enumerable set.

A configuration $x \in A^{\Z^2}$ is \emph{vertically constant} if $\sigma^{(0,1)}(x) = x$.
A shift space is vertically constant if all its configurations are.
The \emph{(two-dimensional) lift} of a one-dimensional configuration $x \in A^\Z$ is the vertically constant configuration $x^\dag \in A^{\Z^2}$ with $(x^\dag)_{(i,j)} = x_i$ for all $(i,j) \in \Z^2$.
The lift of a shift space $X \subseteq A^\Z$ is $X^\dag = \{ x^\dag \mid x \in X \}$.
The lift $X^\dag$ is effectively closed if and only if $X$ is.

\begin{theorem}[\cite{DuRoSh12,AuSa13}]
  \label{thm:BlackBox}
  The two-dimensional lift $X^\dag$ of a $\Z$-shift space $X$ is sofic if and only if $X$ is effectively closed.
\end{theorem}

A one-dimensional configuration $x \in A^\Z$ is \emph{ultimately periodic to the right}, if there exists $p > 0$ such that $x_i = x_{i+p}$ holds for all large enough $i > 0$.
Ultimate periodicity to the left is defined symmetrically.

A function $f : \N \to \N$ is \emph{lower semicomputable} if there exists a computable function $g : \N^2 \to \N$ with $g(n,k+1) \geq g(n, k)$ for all $n, k \in \N$ and $f(n) = \max_{k \in \N} g(n,k)$ for all $n \in \N$.
The function $g$ is called a lower approximation of $f$.
Upper semicomputability is defined analogously.

\subsection{Previous work}

We recall the \emph{countable cover conditions} from~\cite{To20}.

\begin{definition}
  Let $X \subseteq A^\Z$ be a one-dimensional shift space.
  We say $X$ satisfies the countable cover conditions if
  \begin{enumerate}
  \item $X$ is effectively closed,
  \item every $x \in X$ is ultimately periodic to the left and right, and
  \item given $u, v, w \in A^+$, it is decidable whether ${}^\infty u v w^\infty \in X$.
  \end{enumerate}
\end{definition}

Note that these conditions imply that $X$ has a computable language.
Namely, since $X$ is effectively closed, $\lang(X)$ is co-computably enumerable.
Conversely, if $P \in \lang(X)$, then $P$ occurs in some $x \in X$ that is ultimately periodic in both directions, and by the third condition we can enumerate such configurations.

\begin{theorem}[\cite{To20}]
  \label{thm:ccc}
  Let $X \subset A^\Z$ be a one-dimensional shift space.
  If the lift $X^\dag \subset A^{\Z^2}$ is a countably covered sofic shift, then $X$ satisfies the countable cover conditions.
\end{theorem}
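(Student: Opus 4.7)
The plan is to fix a countable SFT cover $(Y,\pi)$ of $X^\dag$, where $Y \subseteq B^{\Z^2}$ is the SFT over an alphabet $B$ and $\pi : B \to A$ is the symbol map, and verify the three conditions in turn.

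For condition (1), I would use only that $X^\dag$ is sofic and hence effectively closed. A word $u \in A^+$ is a forbidden factor of $X$ if and only if the horizontal pattern on $[0,|u|-1] \times \{0\}$ spelling $u$ is forbidden in $X^\dag$. Enumerating forbidden patterns of $X^\dag$ and filtering to those whose domain sits in a single row gives a computably enumerable list of forbidden factors for $X$, so $X$ is effectively closed.

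For condition (2), which I expect to be the technical core, fix $x \in X$ and any $y \in Y$ with $\pi(y) = x^\dag$. Because $x^\dag$ is vertically constant, the fiber $F = \pi^{-1}(x^\dag)$ is a closed subset of $Y$ invariant under the vertical shift $\sigma^{(0,1)}$. By standard topological dynamics, $F$ contains a nonempty minimal closed $\sigma^{(0,1)}$-invariant subset $M$. Since $M$ is countable, compact and Hausdorff, the Baire category theorem forces it to have an isolated point, and minimality then makes every point isolated, so $M$ is a single $\sigma^{(0,1)}$-periodic orbit. Pick $y' \in M$ with $\sigma^{(0,p)}(y') = y'$, taking $p$ large enough to exceed the interaction range of $Y$. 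Regrouping cells column-by-column identifies $y'$ with an element $\tilde y'$ of the one-dimensional SFT $Y_p \subseteq (B^p)^\Z$ whose configurations are those height-$p$ column sequences whose $p$-periodic vertical extension lies in $Y$. Since $Y_p$ injects into $Y$, it is countable. I would then invoke the structural fact that every configuration in a countable one-dimensional SFT is ultimately periodic in both directions: in any presentation graph, a strongly connected component that contained two distinct cycles through a common vertex would already support uncountably many bi-infinite walks. Applied to $\tilde y'$, this yields horizontal periods $q_\pm$ and a threshold $N$ outside of which $y'$ is horizontally periodic; projecting column-wise through $\pi$ transports these periods to $x$.

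For condition (3), I would make the previous argument effective. If $z = {}^\infty u v w^\infty \in X$, then (2) applied to $z$ produces a \emph{biperiodic} cover $y' \in Y$: vertically of period $p$, horizontally of period $q_+$ to the right of some column $N$, horizontally of period $q_-$ to the left of $-N$, with an arbitrary finite central block in between. Such configurations admit finite descriptions, so I would enumerate all tuples $(p, q_+, q_-, N, \text{central block}, \text{two periodic tiles})$ and for each verify decidably that (a) the resulting $\Z^2$-configuration avoids every forbidden pattern of $Y$, which reduces to checking a finite window by biperiodicity, and (b) it projects row-wise to $z$. Success of this search semi-decides $z \in X$; combined with the semi-decidability of $z \notin X$ given by (1), this yields a decision procedure. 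The main obstacle, as anticipated, is step (2): transporting countability of the two-dimensional cover to ultimate periodicity of one-dimensional fibers. The crucial maneuver is the extraction of a vertically periodic cover via the minimality argument, which flattens the problem to a one-dimensional countable SFT, where the structural classification supplies horizontal ultimate periodicity for free; once step (2) is in place, (1) and (3) are routine symbolic and effectivity bookkeeping.
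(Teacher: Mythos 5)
Your proof is correct and follows essentially the same route as the cited source \cite{To20} (the present paper only quotes this theorem without proof): extract a vertically periodic preimage from the countable fiber via a minimal subsystem plus Baire category, flatten to a countable one-dimensional SFT to obtain horizontal eventual periodicity, and then effectivize this witness structure for condition (3). The only step to tighten is condition (1): the given forbidden patterns of $X^\dag$ need not include any single-row patterns at all, so rather than ``filtering'' them you should enumerate the globally inadmissible patterns of the effectively closed shift $X^\dag$ (a computably enumerable set by the standard compactness argument) and restrict that enumeration to single rows.
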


In~\cite{To20} it was shown that the converse holds if $X$ is a subshift of a one-dimensional countable sofic shift.
Note that a one-dimensional countable sofic shift $Y \subseteq A^\Z$ has a very constrained structure: there is a number $p \geq 1$ such that every configuration $y \in Y$ satisfies $y_i = y_{i+p}$ for all $i \in \Z \setminus I_y$, where $I_y \subset \Z$ has cardinality at most $p$~\cite[Lemma~4.8]{PaSc15}.
In other words, every configuration of $Y$ is locally periodic outside of a finite region (and the local periods and the size of the finite region, though not its diameter, are bounded).

\section{Gap width shifts}

\subsection{Definition and main results}

Recall that the max norm of a vector $\vec v = (v_1, \ldots, v_d) \in \Z^d$ is is $\| v \|_\infty = \max(|v_1|, \ldots, |v_d|)$, and the max distance of $\vec u, \vec v \in \Z^d$ is $\| \vec u - \vec v \|_\infty$.

\begin{definition}
  Let $d \geq 1$, let $A$ be a finite alphabet containing $0$, and let $f : \N \to \N$ be any function.
  The \emph{$d$-dimensional gap width shift} $G_d(A,f) \subset A^{\Z^d}$ is the shift space defined by forbidding those $d$-dimensional patterns $P \in A^D$ such that for some $n \geq 0$ we have $|P|_{\neq 0} > f(n)$, and some $\vec u \neq \vec v \in D$ satisfy $P_{\vec u} \neq 0$, $P_{\vec v} \neq 0$ and $\|\vec u - \vec v \|_\infty \leq n$.
  We denote $G_1(A,f) = G(A,f)$.
  If $A$ is clear from the context, we may also denote $G_d(A,f) = G_d(f)$.
\end{definition}

The gap width shift $G_d(f)$ contains exactly those configurations $x \in A^{\Z^d}$ for which the number of nonzero symbols is at most $f(n)$, where $n$ is the minimal max distance of two nonzero symbols in $x$.
It follows that $G(f)$ only contains configurations with a finite (but usually unbounded) number of nonzero symbols.

We are interested in whether gap width shifts, their lifts or their subshifts are countably covered sofic shifts (or even sofic in the first place).
For the one-dimensional full gap width shifts $G(f)$, soficness of the lift $G(f)^\dag$ corresponds to a computability-theoretic property of the function $f$.
It also gives a necessary condition of the soficness of multidimensional gap width shifts.

\begin{proposition}
  \label{prop:when-sofic}
  Let $f : \N \to \N$ be arbitrary.
  For $d \geq 1$, the shift space $G_d(f)$ is effectively closed if and only if $f$ is upper semicomputable.
  The lift $G_1(f)^\dag$ is sofic if and only if $f$ is upper semicomputable.
\end{proposition}

\begin{proof}
  Suppose $f$ is upper semicomputable with computable approximation $g : \N^2 \to \N$.
  Then $F(f)$ is computably enumerable, since the condition $|P|_{\neq 0} > f(n)$ is equivalent to $\exists k \in \N : |P|_{\neq 0} > g(n, k)$.
  Hence $G(f)$ is effectively closed.
  
  Suppose then that $G(f)$ is effectively closed.
  Let $F$ be the complement of $\lang(G(f))$, which is a computably enumerable set.
  For each $n \in \N$, $f(n)$ is the smallest $k$ such that the $\{0, \ldots, k n\} \times \{0\}^{d-1}$-shaped pattern $a(0^{n-1} a)^k$ is in $F$, where $a \in A_0$ is arbitrary.
  It follows that $f$ is upper semicomputable.
  
  The second claim follows from the first one and Theorem~\ref{thm:BlackBox}.
\end{proof}

As our first main result, we show that the converse of Theorem~\ref{thm:ccc} holds for subshifts of one-dimensional gap width shifts with computable gap functions.
The proof is deferred to Section~\ref{sec:main-constr}.

\begin{theorem}
  \label{thm:main}
  Let $f : \N \to \N$ be a computable function and $X \subseteq G(f)$ a shift space.
  Then $X^\dag$ is a countably covered sofic shift if and only if $X$ satisfies the countable cover conditions.
\end{theorem}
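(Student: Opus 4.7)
The forward direction follows immediately from Theorem~\ref{thm:ccc}. For the converse, suppose $X \subseteq G(f)$ satisfies the countable cover conditions. Because $X \subseteq G(f)$, every $x \in X$ has finitely many nonzero symbols, so up to translation $x$ is either all-zero or of the form ${}^\infty 0 \cdot v \cdot 0^\infty$ for some $v \in A^+$ with both endpoints nonzero. Condition (3) makes the set $V$ of such valid $v$ decidable, so $X$ is countable; the task is to exhibit a countable SFT cover of $X^\dag$.

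The plan is to adapt the construction of Aubrun--Sablik and Durand--Romashchenko--Shen underlying Theorem~\ref{thm:BlackBox}, but to replace its free-floating self-similar computational hierarchy by one that is \emph{anchored} to the nonzero symbols. Each nonzero column of $x^\dag$ emits a vertical signal; between consecutive such signals the SFT places a recursive grid of computation blocks whose geometry is determined by the gap length, and these blocks merge upward into a single computation zone spanning the support of $v$. Outside the extremal nonzero columns the cover is forced to be all-zero via boundary signals. Inside the zone, a deterministic Turing machine reads $v$ off of the projection layer, uses the decidability from condition (3) to verify ${}^\infty 0 \cdot v \cdot 0^\infty \in X$, and uses computability of $f$ to verify $|v|_{\neq 0} \leq f(\min_i n_i)$, where $n_1, \dots, n_{k-1}$ are the gap lengths in $v$. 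Since $f$ is computable and $V$ is decidable, this simulation can be forced to halt within vertical space bounded by a computable function of the gap sizes. Because every aspect of the lift is deterministic and pinned to the nonzero positions, each $x \in X$ has exactly one preimage in the cover $Y$, so $|Y| = |X|$ is countable.

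The main obstacle is designing the pinned hierarchy so that every $x \in X$ admits this unique intended lift while no $x \notin X$ admits any lift at all. The Robinson-style hierarchies used in the standard constructions are genuinely uncountable because of free choices of hierarchy center and scale; removing this freedom requires local SFT rules that force each merge to occur at a specific offset computable from the adjacent gap lengths, that force the Turing simulation to both run and halt within the available vertical space, and that propagate a locally detectable failure whenever the membership or count check would fail. Two degenerate cases --- the all-zero configuration and configurations with only a single nonzero symbol --- carry no gap from which to pin the hierarchy, but these lift trivially (by the all-zero configuration and by a finite family of uniformly decorated pillars, respectively), and can be accommodated by a separate sublayer of SFT rules.
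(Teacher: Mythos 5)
Your forward direction and the overall shape of the converse (anchor a computation to the nonzero columns, use condition (3) of the countable cover conditions to decide membership, and use computability of $f$ to bound the computation) match the paper's strategy, but the proposal stops exactly where the proof has to start: you name ``designing the pinned hierarchy'' as the main obstacle and then do not design it. The paper's entire construction toolbox (the grid shift, the column-counting layer of Lemma~\ref{lem:count-columns}, the grid blow-up of Lemma~\ref{lem:blowup}, and the probe layer of Example~\ref{ex:probe}) exists precisely to discharge this obstacle, and each component requires its own countability analysis of the degenerate limit configurations; that analysis is the mathematical content of the theorem, since the standard Aubrun--Sablik/Durand--Romashchenko--Shen hierarchies fail to be countable exactly because of such limits.

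Two of the claims you do make are wrong or unsupported. First, ``each $x \in X$ has exactly one preimage in the cover'' cannot hold: a continuous, shift-commuting bijection between shift spaces is a conjugacy, so a one-to-one SFT cover would make $X^\dag$ itself an SFT, which it is not for, e.g., $X = G(f)$ with nonconstant $f$. Countability instead requires showing every fiber is countable, including over the degenerate configurations that arise as limits of the hierarchy (half-infinite grids, lone computation cones, unsynchronized signals); this is where the real work lies. Second, ``outside the extremal nonzero columns the cover is forced to be all-zero'' and ``a computation zone spanning the support of $v$'' presuppose that the SFT can detect which nonzero columns are extremal and that the computation sees all of them; this is a global property. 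Without a mechanism like the paper's counting and synchronization layers, which bound the total number of nonzero columns by the width of a finite grid and assign each one an index, a configuration ${}^\infty 0\, v\, 0^N v'\, 0^\infty \notin X$ with $N$ huge could be accepted by a computation zone that only ever sees $v$. Relatedly, a machine can only probe a bounded window, so rejecting when ${}^\infty 0 w 0^\infty \notin X$ requires first computing a radius $p$ with $0^p w 0^p \notin \lang(X)$ (by compactness and effectiveness) and then verifying that window against the configuration --- a step absent from your sketch.
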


\begin{corollary}
  \label{cor:main}
  The lift $G(f)^\dag$ is a countably covered sofic shift if and only if $f : \N \to \N$ is computable.
\end{corollary}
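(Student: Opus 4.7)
The plan is to obtain Corollary~\ref{cor:main} as a direct application of Theorem~\ref{thm:main} to the case $X = G(f)$, supplemented by Proposition~\ref{prop:when-sofic} and Theorem~\ref{thm:ccc}.

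For the backward direction, assume $f$ is computable. I would verify that $G(f)$ itself satisfies the three countable cover conditions, and then invoke Theorem~\ref{thm:main} to conclude. Effective closure of $G(f)$ is immediate from Proposition~\ref{prop:when-sofic}, since every computable function is upper semicomputable. Every $x \in G(f)$ has only finitely many nonzero symbols, as observed right after the definition of $G(f)$: with infinitely many nonzeros there is a smallest finite gap $n$ between two consecutive nonzeros, and sufficiently large subwords of $x$ containing such a gap eventually exceed $f(n)$ nonzeros, forming a forbidden pattern. Hence each $x \in G(f)$ is eventually zero on both sides and trivially ultimately periodic. For the decidability condition, I would argue that ${}^\infty u v w^\infty \in G(f)$ forces $u$ and $w$ to be all-zero words, so the configuration reduces to a finite pattern $v$ padded by zeros; computability of $f$ then lets me decide membership by counting the nonzeros of $v$, finding its minimum internal gap $n$, and comparing against $f(n)$.

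For the forward direction, I cannot invoke Theorem~\ref{thm:main} directly, since computability of $f$ is precisely what is being established. Instead I would combine Proposition~\ref{prop:when-sofic} with Theorem~\ref{thm:ccc}: soficness of $G(f)^\dag$ yields upper semicomputability of $f$, and countable coverability yields the countable cover conditions for $G(f)$, in particular the decidability of membership of any ${}^\infty u v w^\infty$ in $G(f)$. Fixing any symbol $a \in A_0$, the configuration ${}^\infty 0 \cdot a (0^n a)^k \cdot 0^\infty$ has exactly $k+1$ nonzero symbols and minimum finite gap $n$, so for $k \geq 1$ it lies in $G(f)$ if and only if $k+1 \leq f(n)$. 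Searching over $k = 1, 2, \ldots$ for the smallest value at which the membership test fails then computes $f(n)$, and this procedure terminates because $f(n) < \infty$.

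The one point that deserves care is that the decidability in condition~(3) of the backward direction really uses computability of $f$, not mere upper semicomputability: to decide whether the nonzero count of $v$ is at most $f(n)$ one needs the exact value of $f(n)$, not a decreasing approximation to it. Apart from this, the corollary is a mechanical packaging of the preceding results.
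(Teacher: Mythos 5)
Your proposal is correct and follows essentially the same route as the paper: the backward direction verifies the countable cover conditions for $G(f)$ (using computability of $f$ to decide membership of ${}^\infty u v w^\infty$ by locating the smallest gap and comparing nonzero counts) and then invokes Theorem~\ref{thm:main}, while the forward direction recovers $f(n)$ from the decidability condition via the test configurations ${}^\infty 0\, a (0^n a)^k\, 0^\infty$, exactly as in the paper. The only (shared, harmless) imprecision is the degenerate case $f(n)=0$, where the minimal failing $k$ is $1$ rather than $0$; this affects neither argument in substance.
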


\begin{proof}
  Suppose that $G(f)^\dag$ is countably covered, so that $G(f)$ satisfies the countable cover conditions.
  Now $f(n) = \min \set{ k \in \N }{ {}^\infty 0 a (0^n a)^k 0^\infty \notin G(f) }$ for each $n \in \N$, where $a \in A_0$ is arbitrary.
  Since ${}^\infty u v w^\infty \in G(f)$ is decidable by assumption, $f$ is computable.
  
  Suppose then that $f$ is computable.
  Then $G(f)$ satisfies the countable cover conditions: it is effectively closed since the set of forbidden patterns $F(f)$ is computably enumerable, every configuration is eventually all-$0$ in both directions, and ${}^\infty u v w^\infty$ can be easily decided by measuring the smallest gap $1 0^n 1$, computing $f(n)$ and comparing that to the number of $1$-symbols.
\end{proof}

In particular, the subclass of one-dimensional gap width shifts with countably covered sofic lifts is also characterized by a computability condition, and is neither empty nor equal to the class of gap width shifts with sofic lifts.

Our second main result concerns two-dimensional gap width shifts.
We show that a large subclass of them are in fact countably covered sofic shifts, including ones that satisfy the conditions of Proposition~\ref{prop:nonsofic-subshift}.
The proof is given in Section~\ref{sec:main2}.

\begin{theorem}
  \label{thm:main2}
  Suppose that $f : \N \to \N$ is nondecreasing and upper semicomputable, and $f(n) < 2^{\sqrt{n}}$ for all large enough $n \in \N$.
  Then $G_2(A,f)$ is a countably covered sofic shift.
\end{theorem}

Interestingly, the condition of the gap function is incomparable to that of Corollary~\ref{cor:main}.
The computational restriction is weaker, intuitively because the lack of a periodic direction allows us to simulate arbitrary infinitely long computations.
On the other hand, we are constrained by a fixed computable upper bound, which stems from the need to transmit information about the number of nonzero symbols in one region of a configuration to another.
We know that the bound is suboptimal, and we have not seriously tried to optimize it since our efforts result in more complex constructions that still cannot reach the bound $2^n$.

\subsection{Gap functions}

The correspondence between the properties of subshifts of $G_d(f)$ and the computability-theoretic properties of the function $f$ is somewhat subtle.
Let us investigate it in more detail.

\begin{definition}
  Let $X \subseteq G_d(f)$ be a shift space.
  Define the \emph{gap function} $f_X : \N \to \N$ of $X$ as follows: $f_X(n)$ is the smallest $k \geq 0$ such that there exists $x \in X$ with $|x|_{\neq 0} = k$ and $\min \{ \| \vec u - \vec v \|_\infty \mid \vec u \neq \vec v, x_{\vec u} \neq 0, x_{\vec v} \neq 0 \} \geq n$.
\end{definition}

In other words, the gap function $f_X : \N \to \N$ is the pointwise smallest nondecreasing function such that $X \subseteq G_d(f_X)$.
For simplicity, we concentrate on the one-dimensional case.

\begin{proposition}
  \label{prop:lower-sc}
  Let $f : \N \to \N$ be any nondecreasing function and suppose $X \subseteq G(f)$ satisfies the countable cover conditions.
  Then $f_X$ is lower semicomputable.
\end{proposition}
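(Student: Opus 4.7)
The plan is to use item~(3) of the countable cover conditions, which decides membership of configurations of the form ${}^\infty u v w^\infty$ in $X$, together with the fact that every configuration in $G(f)$ has only finitely many nonzero symbols. The latter implies that every $x \in X$ has the form ${}^\infty 0 v 0^\infty$ for some finite $v \in A^+$, so I can rewrite
\[
f_X(n) = \max \set{ |v|_{\neq 0} }{ v \in A^+,\ {}^\infty 0 v 0^\infty \in X,\ a 0^m b \sqsubset v \text{ for some } m \leq n,\ a, b \in A_0 },
\]
with the convention that the empty max is $0$. This rewriting is the only structural input to the proof and it collapses the search space from the whole of $X$ to a set of configurations effectively parameterised by finite words.

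With this reduction in hand, I would define the lower approximation $g(n,k)$ as the same maximum restricted to $v$ of length at most $k$. By item~(3) of the countable cover conditions the predicate ${}^\infty 0 v 0^\infty \in X$ is decidable; the remaining constraints on $v$ are purely combinatorial; and replacing $g(n,k)$ by $\max_{k' \leq k} g(n,k')$ enforces monotonicity without affecting computability. So $g$ is a valid computable lower approximation candidate.

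It then remains to verify $f_X(n) = \max_k g(n,k)$. The inequality $g(n,k) \leq f_X(n)$ is immediate, since any $v$ counted in the definition of $g(n,k)$ certifies a configuration in $X$ with the required gap. Conversely, since $X \subseteq G(f)$ and $f$ is nondecreasing we have $f_X(n) \leq f(n) < \infty$, so the max defining $f_X(n)$ is actually attained by some $x = {}^\infty 0 v 0^\infty \in X$, and then $g(n,k) \geq |v|_{\neq 0} = f_X(n)$ as soon as $k \geq |v|$. I do not expect any substantive obstacle: the proposition is essentially bookkeeping, with the sole conceptual step being the observation that $G(f)$-configurations are eventually zero in both directions, so that condition~(3) applies uniformly to every member of $X$.
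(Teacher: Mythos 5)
Your proposal is correct and follows essentially the same route as the paper: both reduce the problem to enumerating the finitely supported configurations ${}^\infty 0 v 0^\infty$ of $X$ via condition~(3) of the countable cover conditions, use each admissible $v$ containing a gap $a 0^m b$ with $m \leq n$ as a witness for a lower bound on $f_X(n)$, and observe that the maximum is attained by some such $v$ because $f_X(n) \leq f(n) < \infty$. Your write-up just makes explicit the lower approximation $g(n,k)$ that the paper leaves implicit.
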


\begin{proof}
  As $X$ satisfies the countable cover conditions, we can algorithmically enumerate its configurations in the form ${}^\infty 0 v 0^\infty$ for $v \in A^*$.
  Every such $v$ containing an occurrence of $a 0^{n-1} b$ is a witness for $f_X(n) \geq |v|_{\neq 0}$, and $f_X(n) = |v|_{\neq 0}$ holds for at least one $v$.
\end{proof}

To recap, if $X = G(f)$ is effectively closed, then $f_X = f$ is upper semicomputable by Proposition~\ref{prop:when-sofic}.
This does not hold for proper subshifts of $G(f)$: there exist effectively closed shift spaces $X \subseteq G(f)$ such that $f_X$ is not upper semicomputable.
On the other hand, if $X \subseteq G(f)$ satisfies the countable cover conditions, then $f_X$ is lower semicomputable by Proposition~\ref{prop:lower-sc}.
If we further know that $f_X$ is computable, then the lift $X^\dag$ is a countably covered sofic shift by Theorem~\ref{thm:main}.
But $f_X$ might not be computable (and \emph{a posteriori} not upper semicomputable).
The example below is a witness for both claims.

\begin{example}
  \label{ex:not-computable-gap}
  There exists a subshift of a gap width shift that satisfies the countable cover conditions, but whose gap function is not bounded from above by any computable function.
  Theorem~\ref{thm:main} does not apply in this situation; we do not know whether such shift spaces have countably covered lifts.
  
  Define a shift space $X \subset \{0,1\}^\Z$ as the set of all translates of the following configurations: the all-$0$ configuration; configurations ${}^\infty 0 (1 0^n)^k 1 0^\infty$ for all $n \geq 2$ and $0 \leq k \leq n$; and configurations ${}^\infty 0 1 0^n (1 0^k)^k 1 0^\infty$ for all $n \geq 2$ such that the $n$th Turing machine halts on empty input in exactly $k > n$ steps.
  This set is a shift space.
  Since $f_X$ has Busy Beaver-like growth, it is not bounded from above by a computable function.

  We claim that $X$ satisfies the countable cover conditions.
  First, it is effectively closed: every word with at most two $1$-symbols occurs in $X$; a word of the form $0^a (1 0^n)^k 0^b$ with $k \geq 2$ occurs in $X$ if and only if $2 \leq n$ and $k \leq n$; and all other words that occur in $X$ have the form $0^a 1 0^n (0^k 1)^m 0^b$ for some $a, b \geq 0$ and $2 \leq n < k$ and $2 \leq m \leq k$, and their occurrence can be checked by simulating the $n$th Turing machine for $k$ steps.
  Second, every configuration of $X$ is clearly ultimately periodic.
  Third, whether ${}^\infty u v w^\infty \in X$ can be decided by checking that it is of the correct form, and possibly simulating a Turing machine for a bounded number of steps.
\end{example}

\subsection{The two-dimensional case}

By Proposition~\ref{prop:when-sofic}, $f$ being upper semicomputable is a necessary condition for $G_2(A,f)$ being sofic.
We do not know whether it is sufficient, although this seems plausible for the following reason.
For a number $0 \leq \alpha \leq 2$, let $X_\alpha \subset \{0,1\}^{\Z^2}$ be the shift space obtained by forbidding, for all $n \geq 2$, each $n \times n$ square with more than $n^\alpha$ occurrences of $1$.
In his PhD thesis \cite[Section III]{De21}, Julien Destombes shows that $X_\alpha$ is sofic whenever $0 \leq \alpha < 1$ is an upper semicomputable number, and so is every effectively closed subshift of $X_\alpha$.

The construction uses \emph{fixed-point tilings} introduced in \cite{DuRoSh12}, in which each configuration consists of a regular grid of $N_1 \times N_1$ patterns called \emph{level-1 macrotiles}, the macrotiles are arranged in a grid of $N_1 N_2 \times N_1 N_2$ patterns called \emph{level-2 macrotiles}, and so on.
Each level-$k$ macrotile gathers information about the positions of $1$-symbols inside it, tries to find a forbidden pattern inside it, and if one was not found, passes said information to the level-$(k+1)$ macrotile containing it.
The construction could presumably be modified so that each macrotile keeps track of just the number of $1$-symbols it contains and the smallest max distance between them, and checks whether the constraint of $G_2(A,f)$ is violated.
Such a modification is beyond the scope of this article, as the fixed-point construction is very complex and never produces a countable-to-one cover.

On the other hand, even some seemingly tame two-dimensional gap width shifts have effectively closed subshifts that are not sofic.
Hence, we cannot improve Theorem~\ref{thm:main2} to include arbitrary effectively closed subshifts of the $G_2(A,f)$ without further constraining the growth rate of $f$.
To prove this, we need the following result.

\begin{lemma}[Theorem~2.3 of~\cite{KaMa13}, paraphrased]
  \label{lem:swap}
  For each sofic shift $X \subseteq A^{\Z^2}$ there exists $C > 0$ with the following property.
  Let $m \geq 1$ and let $S \subseteq X$ have cardinality at least $C^m$.
  Then there exist $x \neq y \in S$ and a configuration $z \in X$ such that $z|_{[0, m-1]^2} = x|_{[0, m-1]^2}$ and $z|_{\Z^2 \setminus [0, m-1]^2} = y|_{\Z^2 \setminus [0, m-1]^2}$.
\end{lemma}

The simplest example that violates the conclusion of Lemma~\ref{lem:swap} is the \emph{mirror shift}, a shift space over $\{0,1,2\}$ whose configurations contain a single infinite column of $2$-symbols surrounded by two half-planes over $\{0,1\}$ that are mirror images of each other (or no $2$-symbols).
We implement a version of the mirror shift as a subshift of $G_2(\{0,1,2\}, f)$.

\begin{proposition}
  \label{prop:nonsofic-subshift}
  Suppose that $f : \N \to \N$ is nondecreasing and computable, and that for all $C > 0$ there exists $n \in \N$ with $f(n) > C n$.
  Then there exists a computable subshift of $G_2(\{0,1,2\},f)$ that is not sofic.
\end{proposition}

\begin{proof}
  Given an integer $k \geq 1$, let $n_k \in \N$ be the smallest integer such that $f(k) \geq 1 + 2 \lceil k n_k \log_2 k \rceil$.
  The function $k \mapsto n_k$ is computable since $f$ is.

  Define a subshift $X \subset G_2(\{0,1,2\}, f)$ with the following constraints on a configuration $x \in X$.
  There can be at most one $2$-symbol.
  If $x$ contains at least two nonzero symbols with some distance $k \geq 1$, then within distance $k n_k$, at some position $\vec v$, there must be a $2$-symbol.
  If $x$ contains a $2$-symbol at position $\vec v$, then for some $k \geq 1$, every $1$-symbol is at a position $\vec v + (i k, j k)$ with $i \in [-n_k,-1] \cup [1,n_k]$ and $j \in [0,n_k-1]$.
  Finally, there is a $1$-symbol at $\vec v + (i k, j k)$ if and only if there is one at $\vec v + (-i k, j k)$.
  This subshift is clearly computable.

  We show that the conclusion of Lemma~\ref{lem:swap} does not hold for $X$.
  Let $C \geq 1$ be given.
  Let $T \subseteq \{ (i C, j C) \mid i \in [-n_C,-1] \cup [1,n_C], j \in [0,n_C-1] \}$ have cardinality $\lceil C n_C \log_2 C \rceil$.
  Let $R$ be the set of patterns of domain $[0, C n_C - 1]^2$ that contain $0$s or $1$s at positions of $T$ and $0$s at all other positions.
  Each such pattern $P \in R$ can be extended into a configuration $x^P \in X$ in which $x^P_{(-C,0)} = 2$ by placing a mirror image of $P$ to the left of the $2$-symbol and filling the rest of the configuration with $0$-symbols.
  These extensions are in $G_2(\{0,1,2\}, f)$, since the minimum distance between two nonzero symbols in $x^P$ is at least $C$, and $f(C) \geq 1 + 2 \lceil C n_C \log_2 C \rceil$, which is the number of nonzero symbols in $x^P$.
  
  Let $S = \{ x^P \mid P \in R\}$.
  Then $|S| = |R| \geq 2^{\lceil C n_C \log_2 C \rceil} \geq C^{C n_C}$, but there do not exist $x \neq y \in S$ satisfying the conclusion of Lemma~\ref{lem:swap}, as all such $x$ and $y$ have different contents at $[0, C n_C -1]$, which are mirrored on the left of the $2$-symbol at $(-C,0)$.
\end{proof}

\section{Construction toolbox}


In this section we present several auxiliary constructions of countable SFTs that are used to prove Theorems~\ref{thm:main} and~\ref{thm:main2}.
Many of them were first used in some form in~\cite{To20}, but we restate them here both for completeness and for ease of reuse.

Our most fundamental building block is the grid shift, a standard example of a nontrivial two-dimensional SFT.
In our constructions it acts as a way of transferring information from one part of a configuration to everywhere else, as a scaffold on which we can implement computation and various geometric gadgets, and as a ``clock'' that periodically resets said computation.

\begin{example}[Grid shift]
  The grid shift $\grid \subset \agrid^{\Z^2}$ is the SFT defined on the Wang tile set $\agrid$ shown in Figure~\ref{fig:grid}.
  A configuration of $\grid$ either contains a regular grid of horizontal and vertical lines, or contains no parallel lines.
  In particular, $\grid$ is a countable SFT.
  For each $n \geq 1$, let $\xgrid(n) \in \grid$ be the configuration containing a grid of side length $n$ with a crossing of grid lines at the origin.
\end{example}

\begin{figure}[htp]
  \centering
  \begin{tikzpicture}[scale=0.6]

    \fill [black!20] (0,3) -- (0,4.5) -- (1.5,4.5) -- (1.5,7) -- (4,7);
    \fill [black!20] (0,0) -- (0,0.5) -- (1.5,0.5) -- (1.5,4.5) -- (5.5,4.5) -- (5.5,7) -- (8,7) -- (1,0);
    \fill [black!20] (1.5,0) -- (1.5,0.5) -- (5.5,0.5) -- (5.5,4.5) -- (9.5,4.5) -- (9.5,7) -- (12,7) -- (5,0);
    \fill [black!20] (5.5,0) -- (5.5,0.5) -- (9.5,0.5) -- (9.5,4.5) -- (12,4.5) -- (12,3) -- (9,0);
    \fill [black!20] (9.5,0) -- (9.5,0.5) -- (12,0.5) -- (12,0);

    \draw (0,3) -- (4,7);
    \draw (1,0) -- (8,7);
    \draw (5,0) -- (12,7);
    \draw (9,0) -- (12,3);
    \draw (0,0.5) -- (12,0.5);
    \draw (0,4.5) -- (12,4.5);
    \draw (1.5,0) -- (1.5,7);
    \draw (5.5,0) -- (5.5,7);
    \draw (9.5,0) -- (9.5,7);

    \draw [dotted] (0,0) grid (12,7);

  \end{tikzpicture}
  \caption{A sample configuration of the grid shift $\grid$. Dotted lines are tile borders, along which colors must match.}
  \label{fig:grid}
\end{figure}

The following is essentially Example 14 of~\cite{To20}.
It allows us to constrain the number of nonzero columns in a configuration of $(A^\Z)^\dag$ by the size of a grid on a separate layer.

\begin{lemma}[Counting columns]
  \label{lem:count-columns}
  Fix a finite alphabet $A$ containing $0$.
  There exists an SFT $\cnt \subset (A^\Z)^\dag \times \grid \times \acnt^{\Z^2}$ with the following properties.
  \begin{enumerate}
  \item
    \label{it:fixN}
    For each $n \in \N$, the set $\cnt(n) := \set{ z \in \cnt }{ \pi_{\mathrm{grid}}(z) = \xgrid(n) }$ is countable, and the projection $\pi_A(\cnt(n))$ is exactly the set of configurations of $(A^\Z)^\dag$ with at most $n-2$ nonzero columns.
  \item
    \label{it:fixA}
    For each $x \in (A^\Z)^\dag$ the set $\set{ z \in \cnt }{ \pi_A(z) = x }$ is countable.
  \end{enumerate}
\end{lemma}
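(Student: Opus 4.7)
The plan is to have the counter layer display a monotone staircase $s(i) = |\{k \leq i : A_k \neq 0\}|$ (up to an integer offset) that records the running count of nonzero columns of the vertically constant $A$-layer, using the horizontal grid lines of $\xgrid(n)$ to bound $s$ by $n-2$. Because $A$ is vertically constant, the predicate ``column $i+1$ is nonzero'' can be read from a single cell of the $A$-layer, so all the local rules below are well-defined.

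I take $\acnt = \{\bot, \top\}$ and declare that $s(i)$ is encoded column by column as follows: inside every vertical strip of height $n$ between two consecutive horizontal grid lines of $\xgrid(n)$, column $i$ reads $\bot^{s(i)+1}\top^{n-s(i)-1}$ for some $0 \leq s(i) \leq n-2$. The SFT rules on the counter layer, conditioned on the grid and $A$-layers, are: (a) the bottom cell of every strip is $\bot$, the top cell is $\top$, and no $\bot$ lies directly above a $\top$ inside a strip, which together force the stated column shape and the cap $s(i) \leq n-2$; (b) within a strip, if $A_{i+1} = 0$ then $c(i+1, j) = c(i, j)$; (c) within a strip, if $A_{i+1} \neq 0$ then $c(i+1, j+1) = c(i, j)$, so the staircase shifts up by exactly one cell. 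These are local rules on $2 \times 2$ tiles once cells are tagged by the grid layer. For degenerate grids containing no parallel lines, rule (a) becomes vacuous and the staircase may grow unboundedly.

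Property (1) then follows: fixing $\pi_{\mathrm{grid}}(z) = \xgrid(n)$ caps the staircase at $n-2$, so $x$ has at most $n-2$ nonzero columns; conversely every such $x$ extends uniquely to a valid $c$ given the fixed grid. Since the set of vertically constant $x$ with at most $n-2$ nonzero columns is countable and $c$ is determined by $x$, the set $\cnt(n)$ is countable. For property (2), fix any $x \in (A^\Z)^\dag$: the grid must be either one of the countably many translates of $\xgrid(n)$ for $n \geq |x|_{\neq 0} + 2$, or one of the countably many degenerate grid configurations, and for each such grid the counter layer is determined by $x$ up to at most an integer shift, so $\{z \in \cnt : \pi_A(z) = x\}$ is a countable union of countable sets.

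The main bookkeeping obstacle is the behaviour of rules (b) and (c) at horizontal grid lines, since the staircase pattern must abruptly reset there from $\top$ at the top of the lower strip to $\bot$ at the bottom of the upper strip. I would handle this by restricting rules (b) and (c) to row pairs lying inside a common strip, which is locally detectable from the grid layer; the vertical constancy of $A$ then guarantees the staircase shape is identical in every strip, so the resets at grid lines are globally consistent and introduce no extra degrees of freedom.
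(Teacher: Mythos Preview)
The core idea—a monotone staircase that rises by one at each nonzero column and is capped by the strip height—is exactly what the paper's counting layer implements. However, your construction has a genuine gap: you do not synchronize the staircase offset across the infinitely many horizontal strips of $\xgrid(n)$, and this makes $\cnt(n)$ uncountable.

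Concretely, take $x$ to be the all-$0$ configuration and $n=4$. Rule (c) never fires, so within each strip the staircase height $s(i)$ is a constant in $\{0,1,2\}$; all three values satisfy rule (a). Since you explicitly restrict rules (b) and (c) to row pairs inside a common strip, nothing relates the constants chosen in different strips, and there are $3^{\aleph_0}$ valid counter layers over this single $(x,\xgrid(4))$ pair. More generally, whenever $|x|_{\neq 0} < n-2$ the offset is underdetermined in every strip independently. Your sentence ``every such $x$ extends uniquely to a valid $c$'' is therefore false, and the countability assertions in both (1) and (2) break: in (2), the phrase ``determined by $x$ up to at most an integer shift'' hides that this shift is one independent integer \emph{per strip}, not a single global one. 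The observation that ``the staircase shape is identical in every strip'' is correct but insufficient—identical shape still allows distinct vertical offsets.

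The paper's proof confronts exactly this issue by adding a second synchronization sublayer. Whenever the staircase (their dashed path) crosses a vertical grid line, it emits a diagonal signal to the southeast that hits the next horizontal grid line and deposits a marker there; these markers are then forced to align with vertically constant dashed lines on the synchronization layer, pinning every strip's offset to the same value. Some device of this kind is unavoidable, because the offset is nonlocal information and an SFT rule confined to one strip cannot carry it across a horizontal grid line.
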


Note that we could enforce a bound of $n$ nonzero columns in item~\ref{it:fixN} instead of $n-2$, but this would require a slightly more complex tile set.

\begin{proof}
  The alphabet $\acnt$ consists of two layers of Wang tiles, which are shown in Figure~\ref{fig:count-alph}.
  The first layer is the \emph{counting layer}, and the second one is the \emph{synchronization layer}.
  The alphabet of $\cnt$ also contains an $A$-layer and a grid layer.

  \begin{figure}[htp]
    \centering
    \begin{tikzpicture}
      
      \node at (-0.5,0.75) {(a)};
      
      \begin{scope}[shift={(0,0)}]
        \draw (0,0) rectangle (1,1);
      \end{scope}

      \begin{scope}[shift={(1.5,0)}]
        \draw[pattern=north east lines] (0,0) rectangle (1,1);
      \end{scope}

      \begin{scope}[shift={(3,0)}]
        \fill[pattern=north east lines] (0.5,0) rectangle (1,0.5);
        \draw (0.5,0) -- (0.5,0.5) -- (1,0.5);
        \draw (0,0) rectangle (1,1);
      \end{scope}

      \begin{scope}[shift={(4.5,0)}]
        \fill[pattern=north east lines] (0,0) rectangle (1,0.5);
        \draw (0,0.5) -- (1,0.5);
        \draw (0,0) rectangle (1,1);
      \end{scope}
      
      
      


      
      \begin{scope}[shift={(6,0)}]
        \fill[pattern=north east lines] (0,0) -- (0,0.5) -- (0.5,0.5) -- (0.5,1) -- (1,1) -- (1,0);
        \draw (0,0.5) -- (0.5,0.5) -- (0.5,1);
        \draw (0,0) rectangle (1,1);
      \end{scope}

      \begin{scope}[shift={(7.5,0)}]
        \fill[pattern=north east lines] (0,0.5) rectangle (1,1);
        \draw[very thick] (0,0.5) -- (1,0.5);
        \draw (0,0) rectangle (1,1);
      \end{scope}
      
      
      

      
      \node at (-0.5,-1.25) {(b)};
      
      \begin{scope}[shift={(0,-2)}]
        \draw (0,0) rectangle (1,1);
      \end{scope}
      
      \begin{scope}[shift={(1.5,-2)}]
        \draw[fill=black!20] (0,0) rectangle (1,1);
      \end{scope}

      \begin{scope}[shift={(3,-2)}]
        \draw[fill=black!40] (0,0) rectangle (1,1);
      \end{scope}
      
      \begin{scope}[shift={(4.5,-2)}]
        \fill[black!20] (0.5,0) rectangle (1,1);
        \draw[very thick] (0.5,0) -- (0.5,1);
        \draw (0,0) rectangle (1,1);
      \end{scope}
      
      \begin{scope}[shift={(6,-2)}]
        \fill[black!40] (0.5,0) rectangle (1,1);
        \draw[very thick] (0.5,0) -- (0.5,1);
        \draw (0,0) rectangle (1,1);
      \end{scope}

      \begin{scope}[shift={(7.5,-2)}]
        \fill[black!20] (0,0) rectangle (0.5,1);
        \draw[dashed] (0.5,0) -- (0.5,1);
        \draw (0,0) rectangle (1,1);
      \end{scope}

      \begin{scope}[shift={(9,-2)}]
        \fill[black!20] (0,0) rectangle (1,0.5);
        \fill[black!40] (0,0.5) rectangle (1,1);
        \draw (0,0.5) -- (1,0.5);
        \draw (0,0) rectangle (1,1);
      \end{scope}

      \begin{scope}[shift={(0,-3.5)}]
        \fill[black!20] (1,1) -- (1,0) -- (0,1);
        \fill[black!40] (0,0) -- (1,0) -- (0,1);
        \draw (0,1) -- (1,0);
        \draw (0,0) rectangle (1,1);
      \end{scope}

      \begin{scope}[shift={(1.5,-3.5)}]
        \fill[black!20] (0.5,1) -- (0.5,0.5) -- (1,0) -- (1,1);
        \fill[black!40] (0.5,0) -- (0.5,0.5) -- (1,0);
        \draw (0.5,0.5) -- (1,0);
        \draw [very thick] (0.5,0) -- (0.5,1);
        \draw [fill=white] (0.5,0.5) circle (0.07cm);
        \draw (0,0) rectangle (1,1);
      \end{scope}

      \begin{scope}[shift={(3,-3.5)}]
        \fill[black!20] (0.5,0) rectangle (1,0.5);
        \fill[black!40] (0.5,0.5) rectangle (1,1);
        \draw (0.5,0.5) -- (1,0.5);
        \draw [very thick] (0.5,0) -- (0.5,1);
        \draw (0,0) rectangle (1,1);
      \end{scope}

      \begin{scope}[shift={(4.5,-3.5)}]
        \fill[black!20] (0,0) rectangle (0.5,0.5);
        \fill[black!20] (0,1) -- (0.5,0.5) -- (0.5,1);
        \fill[black!40] (0,0.5) -- (0.5,0.5) -- (0,1);
        \draw (0,0.5) -- (0.5,0.5) -- (0,1);
        \draw [dashed] (0.5,0) -- (0.5,1);
        \draw (0,0) rectangle (1,1);
      \end{scope}
      
    \end{tikzpicture}
    \caption{The two layers of the alphabet $\acnt$ of the SFT $\cnt$: (a) the counting layer and (b) the synchronization layer.}
    \label{fig:count-alph}
  \end{figure}

  The counting layer forms a unary counter in each vertical segment between two horizontal grid lines.
  This counter is incremented whenever it crosses a nonzero column on the $A$-layer.
  It cannot increase beyond the grid lines, which restricts the number of nonzero columns exactly as desired.
  The counter may have a nonzero initial value, so if we could choose it independently between any two horizontal grid lines, our SFT would be uncountable.
  Thus, the synchronization layer is used to force each counter to have the same initial value.

  Each layer must respect the local matching rules of its Wang tiles.
  In addition, the layers are constrained by the following rules, where $c \in A \times \agrid \times \acnt$ is an arbitrary symbol:
  \begin{enumerate}
  \item
    \label{rule:1}
    The grid layer of $c$ contains a horizontal line if and only if its counting layer contains a thick horizontal line (with a shaded region above it).
  \item
    \label{rule:2}
    The grid layer of $c$ contains a vertical line if and only if the synchronization layer contains a solid vertical line.
  \item
    \label{rule:4}
    If the synchronization layer of $c$ contains a horizontal line, then the grid layer contains a horizontal line.
  \item
    \label{rule:3}
    If the synchronization layer of $c$ contains a white dot, then the counting layer of $c$ has a thin horizontal line on its west border (with a shaded region below it).
  \item
    \label{rule:5}
    If the counting layer of $c$ has a thin horizontal line on its west border, then the line turns north if and only if the $A$-layer is nonzero.
  \end{enumerate}

  \begin{figure}[htp]
    \centering

    \begin{tikzpicture}[scale=0.6]

      \pgfmathsetmacro{\ymax}{12}
      \pgfmathsetmacro{\xmax}{19}

      \begin{scope}
        \clip (0,0) rectangle (\xmax, \ymax);
        \foreach \xl/\xr in {0/1,5/7,10/12,15/18}{
          \fill [black!20] (\xl+0.5, 0) rectangle (\xr+0.5, \ymax);
        }
        \foreach \x in {1,7,12,18}{
          \draw [dashed] (\x+0.5, 0) -- (\x+0.5, \ymax);
        }
        \foreach \y in {0,5,10}{
          \draw (0, \y+1.5) -| (4.5, \y+2.5) -| (10.5, \y+3.5) -- (\xmax, \y+3.5);
          \foreach \x/\h in {0/1,5/2,10/2,15/3}{
            \fill [black!40] (\x+0.5, \y+0.5) -- ++(0,\h) -- ++(\h,-\h);
            \draw (\x+0.5, \y+\h+0.5) -- ++(\h,-\h);
          }
          \fill [pattern=north east lines,opacity=0.5] (0,\y+0.5) -- (0, \y+1.5) -| (4.5, \y+2.5) -| (10.5, \y+3.5) -- (\xmax, \y+3.5) |- cycle; 
          \draw [very thick] (0, \y+0.5) -- (\xmax, \y+0.5);
        }
        \foreach \x/\dy in {0/1,5/2,10/2,15/3}{
          \draw [very thick] (\x+0.5, 0) -- ++(0,\xmax);
          \foreach \y in {0,5,10}{
            \draw [fill=white] (\x+0.5, \y+\dy+0.5) circle (0.12cm);
          }
        }
      \end{scope}

      \draw [dotted] (0,0) grid (\xmax, \ymax);
      \node at (4.5,\ymax+0.5) {$\downarrow$};
      \node at (10.5,\ymax+0.5) {$\downarrow$};
      
    \end{tikzpicture}
    
    \caption{A sample configuration of $\cnt$. Depicted are the grid lines of the grid layer, the counting layer, and the synchronization layer. The two arrows mark nonzero columns of the $A$-layer.}
    \label{fig:count-conf}
  \end{figure}
  
  We describe the structure of configurations $z \in \cnt$; see Figure~\ref{fig:count-conf} for reference.
  Suppose that the grid layer of $z$ contains a grid of finite side length $n$, that is, some translate of $z$ is in $\cnt(n)$.
  By rules~\ref{rule:1} and~\ref{rule:2}, this grid completely determines the positions of all thick horizontal lines of the counting layer and all solid vertical lines of the synchronization layer.
  By rule~\ref{rule:4} it also determines the positions of horizontal lines of the synchronization layer, which must coincide with those of the other layers.
  The bi-infinite stripe between each pair of thick horizontal lines of the counting layer contains a single bi-infinite thin path.
  By rule~\ref{rule:5} the path goes up by one tile whenever it meets a nonzero column on the $A$-layer, and continues horizontally otherwise.
  Since there are $n-1$ rows between two adjacent grid lines, the $A$-layer of $z$ can contain at most $n-2$ nonzero columns.
  
  The positions of the thin paths of different stripes in $z$ are synchronized by the synchronization layer.
  Between each pair of vertical grid lines lies a single dashed vertical line of the synchronization layer.
  Wherever it crosses a horizontal grid line, a diagonal line is sent to the northwest.
  By rule~\ref{rule:3}, it must meet the path of the counting layer and a vertical grid line at the same position (at a white dot).
  Thus, the heights of the counters are synchronized at each vertical grid line.
  Since there are also finitely many choices for the $A$-layer, the set $\cnt(n)$ is countable.
  We also see that the $A$-layer can be chosen freely, as long as it has at most $n-2$ nonzero columns.
  This proves item~\ref{it:fixN}.
  
  For item~\ref{it:fixA}, fix a configuration $x \in (A^\Z)^\dag$, and let $z \in \cnt$ be such that $\pi_A(z) = x$.
  We show that there are a countable number of choices for $z$.
  If the grid layer of $z$ contains a grid of finite side length, the claim follows from item~\ref{it:fixN}.
  
  Suppose it does not contain such a grid.
  Then the counting layer contains at most one thick horizontal line, and hence at most two thin paths.
  Since the $A$-layer is fixed, the position at which a thin path crosses the y-axis determines the entire path, so the number of choices for the counting layer is countable.
  The synchronization layer contains at most one solid vertical line (and hence at most two dashed vertical lines), and its horizontal line segments (the south sides of the dark gray triangles) are constrained to a single line, which is the sole horizontal line of the grid layer if one exists.
  There cannot be more than two infinite diagonal lines, and the positions of finite diagonal segments are determined by the other lines.
  Thus, we have countably many choices for the synchronization layer as well.
\end{proof}

We now introduce a method for simulating computation in countable SFTs.
To our best knowledge, its first use is in~\cite{SaTo13}.

\begin{example}[Counter machines]
  \label{ex:counter-machines}
  We describe a way of simulating computation by nondeterministic oracle counter machines in SFTs.
  We do not explicitly define a $k$-counter machine, but it consists of a finite oracle alphabet $A$, a finite state set $Q$, an initial state $q_0 \in Q$, and a transition relation $\delta$.
  The machine has $k$ counters that store one natural number each, which are initialized to 0, and possibly an infinite oracle tape $x \in A^\N$.
  If the oracle tape exists, then one of the counters is designated as the \emph{oracle counter}.
  Alternatively, the oracle tape may be a bi-directional configuration $x \in A^\Z$, in which case each counter will store an integer instead of a natural number.
  In this latter case the counter machine is called \emph{bidirectional}.
  
  In one computation step the machine will check the relative order of its counters, which of them have value 0, as well as the symbol $x_n \in A$, where $n$ is the current value of the oracle counter.
  Based on the above data and the current state, it will change the value of any of the counters by at most 1 and assume a new state.
  The validity of a given transition is determined by the relation $\delta$.
  If there are two or more valid actions, one is chosen nondeterministically.
  
  The machine may also have a designated final state $q_f$.
  If the machine enters $q_f$, it \emph{accepts}, and if it ever has no possible next state, it \emph{rejects}.
  
  Let $M$ be a one-directional $k$-counter machine as above.
  Define $H_M = H'_M \times \{{\leftarrow},{\rightarrow}\}$ and $A_M = \{\#\} \cup ((H_M \cup \{L,R\}) \times \{Z,P\}^k \times A)$, where $H'_M$ is an auxiliary alphabet that contains a transition from $\delta$, a partial ordering of the $k$ counters, and a symbol of $A$.
  Define an SFT $X_M \subset A_M^{\Z^2}$ as follows.
  First, for each position $\vec v \in \Z^2$, there is a non-$\#$ symbol at $\vec v$ if and only if $\vec v + (0,1)$ and $\vec v + (1,1)$ both contain non-$\#$ symbols.
  Then each connected component of non-$\#$ cells is shaped like a translate of $\{(i,j) \mid j \geq 0, 0 \leq i \leq j\}$.
  Such a component is called a \emph{computation cone}, and its southmost cell is called its \emph{vertex}.
  
  We interpret time as increasing upward and introduce (nondeterministic) local rules that transform a horizontal row of a computation cone into the one above it.
  Each row will contain exactly one symbol $(q,d) \in H_M$ on the first layer, interpreted as a ``zig-zag head'' traveling in the direction $d \in \{{\leftarrow},{\rightarrow}\}$.
  The symbols $L$ and $R$ fill the rest of the row on this layer to the left and right of the head, respectively.
  The zig-zag head bounces between the east and west borders of the computation cone, and one such back-and-forth sweep corresponds to a single computation step of $M$.
  Each counter is represented by the number of $P$-symbols on its own layer, which has the form $P^m Z^n$ for some $m, n \geq 0$.
  As the zig-zag head travels the length of the computation cone, it records the relative order of the counter values in its internal state, and updates counter values based on the transition it contains.

  Let us go into a little more detail.
  Each row of a computation cone consists of $k+2$ layers.
  The first layer must be in the language $L^* H_M R^*$ (storing the position and internal state of the zig-zag head) and the next $k$ layers must be in $P^* Z^*$ (storing the value $m \geq 0$ of one counter as $P^m Z^n$).
  The final layer is in $A^*$, and this layer of each row of the cone is a prefix of the one above; their limit corresponds to the oracle tape.
  At the vertex of the cone, the head contains an arbitrary transition from the initial state $q_0$ to some other state.
  
  The head travels in the direction of its arrow component, with speed 2 to the east and speed 1 to the west.
  The simulation of one computation step of $M$ begins at the west border of the cone, where the zig-zag head picks a new transition $t \in \delta$ into its $H'_M$-component and walks east.
  When it steps east over a $PZ$-pattern on one of the counter layers, it updates that counter's value if specified to do so by the transition $t$.
  Upon reaching the east border of the cone, it turns west and resets the partial order in its state into an emtpy one.
  When the head steps west over a $PZ$-pattern, it records into its internal state the place of the counter in the ordering of all counters, as well as the symbol on the $A$-layer in case of the oracle counter.
  Upon returning to the west border, the head picks a new valid transition from $\delta$ (based on the $A$-symbol and the now total order it stores) and begins the next computation step.
  
  If the machine is bidirectional, we modify the computation cone to grow both to the left and to the right.
  The left and right halves of the cone have different background colors, and the central column of the cone corresponds to counter value $0$.
  
  The most important property of $X_M$ is that if we ignore the $A$-component, the set of ``degenerate'' configurations -- those that do not contain a simulated computation of $M$ -- is countable.
  Indeed, such a configuration has at most one connected region of non-$\#$ cells, which is shaped like a half-plane and contains at most one infinite sweep of the zig-zag head.
  We will modify this basic construction as needed, in particular by allowing the computation cones to be cut short when $M$ halts, and attaching them to grid cells.
  
  Recall from~\cite[Section~11.2]{Mi67} that any Turing machine computation can be simulated by a counter machine computation in a way that incurs an exponential blowup in the number of computation steps.
  In our construction, the first $n$ simulated steps of a counter machine computation fit in an exponential-size square pattern.
  Hence, the blowup from a Turing machine computation to a pattern is doubly exponential.
\end{example}

\begin{example}[Simulating tile sets in grids]
  \label{ex:counter-machine-grids}
  Let $(T_n)_{n \in \N}$ be a computable sequence of Wang tile sets whose colors are in $\N$, meaning that each $T_n$ is a finite subset of $\N^4$.
  We decribe a way of simulating each $T_n$ by a single SFT, which is countable if each $T_n$ is.
  
  Take the grid shift $\grid$ and superimpose on each gray region a computation of a counter machine using a variation of the SFT $X_M$ in which the computation cones are allowed to be cut off at a horizontal border of the grid provided that the simulated machine has halted, and the oracle tape is blank.
  We give $M$ one counter that corresponds to a choice of which tile set $T_n$ to use, and four counters that correspond to the four sides of a tile.
  The machine $M$ first nondeterministically sets each of these counters to some value (by incrementing them some number of times) and never modifies them again.
  Then it simply checks whether the 5-tuple $(n, t_e, t_n, t_w, t_s)$ they form satisfies $(t_e, t_n, t_w, t_s) \in T_n$, halting if it does.
  In this case we say that the machine, or the grid cell containing it, simulates the tile $(t_e, t_n, t_w, t_s)$ of $T_n$.
  
  We also use signals to transfer information between the machines in adjacent grid cells.
  We enforce each simulated machine in a single configuration to choose the same value of $n$, and for the direction counters to be related as in a Wang tiling (so the north counter of a grid cell must equal the south counter of its north neighbor, and analogously for west and east).
  In this way, the grid cells of a valid configuration of the SFT together simulate a configuration of $T_n$.

  The width of the grid is not constrained, except that if a simulated machine does not have enough space to finish its computation, a tiling error results.
  However, for each $n$ there exists $k \geq 1$ such that any grid of width at least $k$ can simulate an arbitrary configuration of $T_n$.
  
  See~\cite[Theorem~6.2]{SaTo13} for a more detailed construction and analysis.
\end{example}

Consider the following scenario.
We have a vertically constant layer over $A$ that we wish to constrain.
The constraints we wish to apply depend on some integer value $n \geq 1$, which in our case is the minimum distance between two nonzero symbols.
We can add a grid layer, somehow extract $n$ from the $A$-layer of a nondegenerate configuration and constrain the grid cells to have width $n$.
Also, from $n$ we can compute a Wang tile set $T_n$ that, when coupled with the $A$-layer in a suitable way, implements the constraints we want.
We would like to add a layer given by Example~\ref{ex:counter-machine-grids} that simulates all the tile sets $T_n$, and couple it with both the grid layer (where it gets the value $n$) and the $A$-layer (which it constrains).
The issue is that the simulated configurations of $T_n$ are not on the same ``scale'' as the $A$-layer: they are stretched over grids whose width we cannot control, so that each simulated tile of $T_n$ contains an $m \times m$ pattern of $A$-symbols, for some unknown and typically very large $m \geq 1$.

In the next example, which a new contribution, we provide an interface through which the simulated tilings of $T_n$ can still access and constrain arbitrary columns of the $A$-layer.
The tile sets $T_n$ have to be specifically designed to make use of this interface; in the above scenario, we would probably have to redesign them completely.

\begin{example}[Coupling simulated tile sets with vertical layers]
  \label{ex:probe}
  We describe a modification to the SFT $X$ of Example~\ref{ex:counter-machine-grids} that adds a vertically constant layer over a separate alphabet $A$, called the \emph{base alphabet layer}, and gives the simulated tile sets access to it.
  For this, we assume that each tile set $T_n$ has a separate layer, called the \emph{probe interface layer}, over the auxiliary alphabet $B = \{0,1,2\} \cup (A \times \{0,1\})$.
  All $2 \times 1$ patterns except $00$, $0a$, $a1$, $a2$, $11$, $12$ and $22$ for $a \in A \times \{0,1\}$ are forbidden on the probe interface layer.
  Hence, each row of this layer has the form ${}^\infty 0 a 1^k 2^\infty$ for some $a \in A \times \{0,1\}$ and $k \geq 0$ (or a degenerate version).
  As the name suggests, the probe interface layer is used to ``probe'' the contents of the base alphabet layer.

  The interface between the base alphabet layer and the simulated tiling of $T_n$ is the following.
  As in Example~\ref{ex:counter-machine-grids}, we have a grid layer, and each grid cell simulates a tile of $T_n$ for a common $n$.
  Suppose that on some row of the simulated tiling, simulated by $k+2$ consecutive grid cells, we have a word of the form $w = (a,c) 1^k 2$ on the probe interface layer, with $k \geq 0$ and $(a,c) \in A \times \{0,1\}$.
  Then $a$ must be equal to the symbol of the base alphabet layer on the $k$th column of the grid cell that simulates $(a,c)$.
  We think of this as the simulated tile set $T_n$ placing the word $w$ at a certain position of its probe interface layer in order to access a certain column of the non-simulated base alphabet layer.
  Also, $c = 1$ if and only if the probed column is the rightmost one of the grid cell; this lets the tile set $T_n$ avoid attempting to probe beyond the grid cell that simulates $(a,c)$, which our construction does not allow.
  Note that this is not an essential restriction, since different rows of $T_n$ may probe columns of different grid cells, so that a single simulated configuration of $T_n$ may access every column of the base alphabet layer.

  We construct the counter machine $M$ of Example~\ref{ex:counter-machine-grids} so that it contains a special \emph{probe state} $q_b$ for each $b \in B$.
  It will enter a probe state $q_b$ at some point during its computation if and only if the probe layer of the tile it simulates equals $b$.
  Hence, during a finite computation $M$ will enter exactly one of the probe states before halting.
  
  We superimpose on $X \times (A^\Z)^\dag$ a configuration over the tile set $\aprb$ of Figure~\ref{fig:probe-alphabet}, called the \emph{probe implementation layer}, and further constrain it using the following rules:
  \begin{enumerate}
  \item
    \label{it:prb1}
    The thick horizontal and vertical lines of the probe implementation layer must coincide with those of the grid layer of $X$.
  \item
    \label{it:prb2}
    The highlighted tiles in Figure~\ref{fig:probe-alphabet}, which include only those with $b_0 \in A \times \{0\}$, are called the \emph{probe tiles}.
    The $A$-part of the symbol $b_0$, $a_0$ or $a_1$ of a probe tile must be equal to the symbol on the $A$-layer.
  \item
    \label{it:prb3}
    If the west border of a grid cell has on its right the zig-zag head of a simulated machine $M$ in a probe state $q_b$ for $b \in B$, then the corresponding vertical line of the probe layer must hold the symbol $b$.
  \end{enumerate}
  Denote by $H \subseteq X \times (A^\Z)^\dag \times \aprb^{\Z^2}$ the SFT thus obtained.
  By rule~\ref{it:prb1} we can think of the probe implementation layer as decorations over the grid that simulates copies of $M$.
  The probe implementation layer of a sample configuration is depicted in Figure~\ref{fig:probe-conf}, which the reader should consult to better understand the following explanation.

  \begin{figure}[htp]
    \centering

    \begin{tikzpicture}

      \draw[gray,dashed,rounded corners=3mm] (8.75, 5.5) -| (10.25, 1.75) -| (7.15, 3.5) -- (8.75, 3.5) -- cycle;
      
      \begin{scope}[shift={(0,4)}]
        \draw (0,0) rectangle (1,1);
      \end{scope}

      \begin{scope}[shift={(1.5,4)}]
        \draw[fill=black!20] (0,0) rectangle (1,1);
      \end{scope}

      \begin{scope}[shift={(3,4)}]
        \draw[fill=black!40] (0,0) rectangle (1,1);
      \end{scope}

      \begin{scope}[shift={(4.5,4)}]
        \fill[black!20] (0.5,0.5) rectangle (0,1);
        \draw (0,0) rectangle (1,1);
        \draw[very thick] (0.5,0) -- (0.5,1);
        \draw[very thick] (0,0.5) -- (1,0.5);
        \fill (0,0.4) arc (-90:90:0.1cm);
        \node at (0.5,-0.25) {$b$};
        \node at (0.5,1.25) {$1$};
      \end{scope}

      \begin{scope}[shift={(6,4)}]
        \fill[black!20] (1,0.5) rectangle (0,1);
        \draw (0,0) rectangle (1,1);
        \draw[very thick] (0.5,0) -- (0.5,1);
        \draw[very thick] (0,0.5) -- (1,0.5);
        \fill (0,0.4) arc (-90:90:0.1cm);
        \node at (0.5,-0.25) {$b$};
        \node at (0.5,1.25) {$1$};
      \end{scope}

      \begin{scope}[shift={(7.5,4)}]
        \fill[black!40] (0.5,0.5) rectangle (1,1);
        \draw (0,0) rectangle (1,1);
        \draw[very thick] (0.5,0) -- (0.5,1);
        \draw[very thick] (0,0.5) -- (1,0.5);
        \node at (0.5,-0.25) {$b$};
        \node at (0.5,1.25) {$a_*$};
      \end{scope}

      \begin{scope}[shift={(9,4)}]
        \draw (0,0) rectangle (1,1);
        \draw[very thick] (0.5,0) -- (0.5,1);
        \draw[very thick] (0,0.5) -- (1,0.5);
        \node at (0.5,-0.25) {$b$};
        \node at (0.5,1.25) {$b_0$};
      \end{scope}


      \begin{scope}[shift={(0,2)}]
        \draw (0,0) rectangle (1,1);
        \draw[very thick] (0,0.5) -- (1,0.5);
      \end{scope}

      \begin{scope}[shift={(1.5,2)}]
        \fill[black!20] (0,0.5) rectangle (1,1);
        \draw (0,0) rectangle (1,1);
        \draw[very thick] (0,0.5) -- (1,0.5);
      \end{scope}

      \begin{scope}[shift={(3,2)}]
        \fill[black!20] (0,0.5) rectangle (1,1);
        \draw (0,0) rectangle (1,1);
        \fill (1,0.6) arc (90:270:0.1cm);
        \draw[very thick] (0,0.5) -- (1,0.5);
      \end{scope}

      \begin{scope}[shift={(4.5,2)}]
        \fill[black!40] (0,0.5) rectangle (1,1);
        \draw (0,0) rectangle (1,1);
        \draw[very thick] (0,0.5) -- (1,0.5);
      \end{scope}

      \begin{scope}[shift={(6,2)}]
        \fill[black!20] (0,0) rectangle (1,0.5);
        \draw (0,0) rectangle (1,1);
        \draw[dashed] (0,0.5) -- (1,0.5);
      \end{scope}

      \begin{scope}[shift={(7.5,2)}]
        \fill[black!20] (0,1) -- (0.5,0.5) -- (1,0.5) -- (1,1);
        \fill[black!40] (0,0.5) -- (0.5,0.5) -- (0,1);
        \draw (0,0) rectangle (1,1);
        \draw[very thick] (0,0.5) -- (1,0.5);
        \draw (0,1) -- (0.5,0.5);
        \node at (-0.05,1.25) {$a_0$};
      \end{scope}
      
      \begin{scope}[shift={(9,2)}]
        \fill[black!20] (0,1) -- (0.5,0.5) -- (1,0.5) -- (1,1);
        \fill[black!40] (0,0.5) -- (0.5,0.5) -- (0,1);
        \draw (0,0) rectangle (1,1);
        \draw[very thick] (0,0.5) -- (1,0.5);
        \fill (1,0.6) arc (90:270:0.1cm);
        \draw (0,1) -- (0.5,0.5);
        \node at (-0.05,1.25) {$a_1$};
      \end{scope}

      \begin{scope}[shift={(0,0)}]
        \draw (0,0) rectangle (1,1);
        \draw[very thick] (0.5,0) -- (0.5,1);
        \node at (0.5,-0.25) {$b$};
        \node at (0.5,1.25) {$b$};
      \end{scope}

      \begin{scope}[shift={(1.5,0)}]
        \draw[fill=black!20] (0,0) rectangle (1,1);
        \draw[very thick] (0.5,0) -- (0.5,1);
        \node at (0.5,-0.25) {$1$};
        \node at (0.5,1.25) {$1$};
      \end{scope}

      \begin{scope}[shift={(3,0)}]
        \fill[black!40] (0.5,0) rectangle (1,1);
        \draw (0,0) rectangle (1,1);
        \draw[very thick] (0.5,0) -- (0.5,1);
        \node at (0.5,-0.25) {$a_*$};
        \node at (0.5,1.25) {$a_*$};
      \end{scope}

      \begin{scope}[shift={(4.5,0)}]
        \fill[black!20] (0,0) rectangle (0.5,0.5);
        \draw (0,0) rectangle (1,1);
        \draw[very thick] (0.5,0) -- (0.5,1);
        \draw[dashed] (0,0.5) -- (0.5,0.5);
        \node at (0.5,-0.25) {$1$};
        \node at (0.5,1.25) {$1$};
      \end{scope}

      \begin{scope}[shift={(6,0)}]
        \fill[black!20] (0,0) -| (1,0.5) -| (0.5,1) -| cycle;
        \draw (0,0) rectangle (1,1);
        \draw[very thick] (0.5,0) -- (0.5,1);
        \draw[dashed] (0.5,0.5) -- (1,0.5);
        \node at (0.5,-0.25) {$1$};
        \node at (0.5,1.25) {$1$};
      \end{scope}

      \begin{scope}[shift={(7.5,0)}]
        \fill[black!20] (0.5,0.5) -- (1,0.5) -- (1,0);
        \fill[black!40] (0.5,0.5) -- (0.5,0) -- (1,0);
        \draw (0,0) rectangle (1,1);
        \draw[very thick] (0.5,0) -- (0.5,1);
        \draw[dashed] (0.5,0.5) -- (1,0.5);
        \draw (0.5,0.5) -- (1,0);
        \node at (0.5,-0.25) {$a_*$};
        \node at (0.5,1.25) {$a_*$};
        \node at (1.1,-0.25) {$a_*$};
      \end{scope}

      \begin{scope}[shift={(9,0)}]
        \fill[black!20] (0,1) -- (1,0) -- (1,1);
        \fill[black!40] (0,1) -- (1,0) -- (0,0);
        \draw (0,0) rectangle (1,1);
        \draw (0,1) -- (1,0);
        \node at (-0.1,1.25) {$a_*$};
        \node at (1.1,-0.25) {$a_*$};
      \end{scope}
      
    \end{tikzpicture}
    
    \caption{The tile set $\aprb$ of the probe implementation layer. Here $b$ ranges over $B$, $b_0$ ranges over $\{0,1,2\} \cup (A \times \{0\}) \subset B$, $a_*$ ranges over $A \times \{0,1\}$, and $a_i$ for $i \in \{0,1\}$ ranges over $A \times \{i\}$.}
    \label{fig:probe-alphabet}
  \end{figure}

  \begin{figure}[htp]
  \centering

  \begin{tikzpicture}[scale=0.6]

    \pgfmathsetmacro{\ymax}{13}
    \pgfmathsetmacro{\xmax}{19}

    \fill [black!20] (5.5,2.5) -| ++(4,-1) -| ++(4,-1) -- (7.5,0.5) -- cycle;
    \fill [black!20] (1.5,7.5) -| ++(4,-1) -| ++(4,-1) -| ++(4,-1) -- (4.5,4.5) -- cycle;
    \fill [black!40] (5.5,2.5) |- ++(2,-2) -- cycle;
    \fill [black!40] (1.5,7.5) |- ++(3,-3) -- cycle;
    
    \draw[dotted] (0,0) grid (\xmax,\ymax);

    \foreach \y in {0,4,8,12}{
      \draw [very thick] (0,\y+0.5) -- (\xmax,\y+0.5);
    }
    \foreach \x in {1,5,9,13,17}{
      \draw [very thick] (\x+0.5,0) -- (\x+0.5,\ymax);
    }

    \draw (5.5,2.5) -- ++(2,-2);
    \draw (1.5,7.5) -- ++(3,-3);

    \foreach \x/\y in {5/2,9/1, 1/7,5/6,9/5}{
      \draw [dashed] (\x+0.5,\y+0.5) -- ++(4,0);
    }
    \foreach \x/\y in {9/0,13/0, 5/4,9/4,13/4}{
      \fill (\x,\y+0.5) circle (0.15cm);
    }

    \foreach \x/\y/\l in {
      1/3/0, 5/3/a_0, 9/3/1, 13/3/1, 17/3/2,
      1/5/a_1, 5/5/1, 9/7/1, 13/7/1, 17/7/2,
      1/11/0,5/11/a_0,9/11/2,13/11/2,17/11/2
    }{
      \node [fill=white,minimum width=0.35cm,minimum height=0.45cm,inner sep=0cm] at (\x+0.5,\y+0.5) {$\l$};
    }
    
  \end{tikzpicture}
  
  \caption{A configuration over $\aprb$, superimposed over a configuration of $\grid$. Each vertical segment is labeled with the type of symbol it carries (each tile of the segment carries the same symbol); $a_i$ stands for $A \times \{i\}$.}
  \label{fig:probe-conf}
\end{figure}

  Consider a configuration $(x, y, z) \in H$, where $x \in X$ contains a grid of width $m \geq 1$ that simulates a configuration of tile set $T_n$ for some $n \in \N$, and $y \in (A^\Z)^\dag$ is the base alphabet layer.
  By rule~\ref{it:prb1}, the horizontal and vertical lines of the probe implementation layer $z$ are aligned with the grid.
  Each grid cell $C$ contains a simulated copy of the machine $M$, which enters exactly one probe state $q_b$ during its computation.
  Here, $b \in B$ is the probe interface layer of the tile simulated by $C$.
  At the point where the zig-zag head of $C$ touches the west border of its computation cone in state $q_b$, rule~\ref{it:prb3} causes the neighboring tile on the west border of $C$, and hence the entire west border, to be colored with the symbol $b$.
  
  By our assumption on $T_n$, the probe interface layer of each row of the simulated tiling contains at most one symbol from $A \times \{0,1\}$, and on its right some number of $1$-symbols, all other symbols coming from $\{0,2\}$.
  Thus, a given row of the grid contains at most one grid cell $C_0$ whose west border holds a value $(a, c) \in A \times \{0,1\}$.
  Cell $C_0$ is the one whose $A$-layer we are probing on this particular row.
  Thus, if the next $k \geq 0$ grid cells $C_1, \ldots, C_k$ to the east of $C_0$ hold $1$-symbols on their west borders, we need to access the base alphabet layer of the $k$th column of $C_0$ from the left.
  We describe how this is achieved with the help of the probe implementation layer.

  Suppose first that $k \geq 1$.
  Note from Figure~\ref{fig:probe-alphabet} that in a crossing of grid lines (last 4 tiles on row 1), the north border of the tile holds a $1$-symbol if and only if the northwest part of the tile is light gray.
  Hence, a grid cell contains a gray region if and only if its east border holds a $1$-symbol.
  The gray region of each grid cell that contains one is a rectangle delimited by its south, west and east borders and a dashed horizontal line.
  If two neighboring grid cells contain gray regions, their heights differ by 1, the west one being higher (see tiles 4 and 5 on row 3 of Figure~\ref{fig:probe-alphabet}).
  
  The east border of grid cell $C_k$ does not hold a $1$-symbol, so $C_k$ does not contain a gray region.
  Inductively, for each $i = 0, \ldots, k-1$ the grid cell $C_i$ has a gray region of height $k-i$.
  From the northwest corner of the gray region of $C_0$, which has height $k$, a diagonal signal is sent to the southeast (tile 6 on row 3 of Figure~\ref{fig:probe-alphabet}).
  
  The diagonal signal carries the same symbol $(a, c)$ as the west border of $C_0$ (tiles 6 and 7 on row 3 of Figure~\ref{fig:probe-alphabet}).
  As it hits the south border of $C_0$ at distance $k$ from its southwest corner (tiles 6 and 7 on row 2 of Figure~\ref{fig:probe-alphabet}), rule~\ref{it:prb2} guarantees that the symbol on the base alphabet layer at that position is $a$.
  Also, since the diagonal signal cannot intersect another vertical grid line, we must have $k < m$.
  Because of the black disks at the borders of certain tiles, we have $c = 1$ if and only if the diagonal signal hits the rightmost tile of the south border of $C_0$, if and only if $k = m-1$.
  
  Suppose now that $k = 0$, so that the east border of $C_0$ holds the value $2$ and $C_0$ does not contain a gray region.
  The southwest corner of $C_0$ must then be the tile at the top right of Figure~\ref{fig:probe-alphabet} with $b_0 \in A \times \{0\}$.
  By rule~\ref{it:prb2}, the west border of $C_0$ holds the symbol $(a, 0)$, where $a \in A$ is the symbol of the base alphabet layer on the same column.
  
  We have now shown that for each row of a simulated tiling of $T_n$ whose probe interface layer contains a word of the form $(a,c) 1^k 2$ with $k \geq 0$, we must have $k < m$, $a$ must equal the symbol on the $A$-layer of the $k$th column of the grid cell simulating the symbol $(a,c)$, and $c = 1$ if and only if $k = m-1$.

  As for the cardinality of the new SFT, note first that if the grid layer contains a finite-width grid, then the probe implementation layer is completely determined by the other layers.
  Conversely, if there is no finite-width grid, then the probe implementation layer too contains no parallel thick lines, and a simple case analysis shows that the number of valid contents for this layer is countable when the other layers are fixed.
\end{example}



A weaker version of the following result is stated explicitly, and proved in less detail, in~\cite{To20} as part of the proof of Theorem~1.
The new properties are the tile $t_n$ and the bound $k_n = O(\log \log n)$.
The tileset needs to be modified to obtain them.

\begin{lemma}
  \label{lem:det-tilesets}
  Let $g : \N \to \N$ be a computable function.
  Then there exists a sequence $(C_n, T_n, a_n, b_n, t_n)_{n \in \N}$, computable in polynomial time, with the following properties for all $n \in \N$.
  \begin{enumerate}
  \item
    $T_n$ is an NW-deterministic Wang tileset with edge colors $C_n = \{1, \ldots, k_n\}$, where $k_n = O(\log \log n)$.
  \item
    \label{it:tiles-rect}
    $a_n, b_n \in C_n$ are edge colors and $t_n \in T_n$ a tile such that for some $m \geq g(n)$, $T_n$ tiles an $m \times m$ square whose entire north border is colored with $a_n$, west border with $b_n$, and the tile at the southeast corner is $t_n$.
    Furthermore, $T_n$ does not tile any such rectangle for $m < g(n)$.
  \item
    $T_n$ does not tile the infinite plane.
  \end{enumerate}
\end{lemma}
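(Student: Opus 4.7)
My plan is to build $T_n$ as the space--time diagram of a deterministic counter machine that computes $g(n)$, in the usual way where rows are successive machine configurations and time flows southward. First fix a constant-size NW-deterministic Wang tile set $U$ that simulates an arbitrary deterministic multi-counter machine; NW-determinism is inherited from the determinism of the simulated machine's transition function. Let $M_g$ be a counter machine which, on input $n$, computes $g(n)$ and then runs a padding counter up to $g(n)$, so that its total running time and tape usage are at least $g(n)$, and which diverges whenever $g(n)$ is undefined.

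To keep the alphabet at $k_n = O(\log\log n)$ I would not encode $n$ in the colors, but only in the tile list: besides the tiles of $U$, I append $O(\log n)$ \emph{input-tape} tiles, one per bit of the binary expansion of $n$, each carrying its own position index $i \in \{0,\dots,\lceil \log_2 n \rceil -1\}$ together with the bit $b_i$ as color data on its edges. The position index takes $O(\log\log n)$ bits, which drives the bound on $k_n$, and the whole sequence $(C_n, T_n, a_n, b_n, t_n)$ is polynomial-time computable because assembling $T_n$ only involves listing the $U$-tiles plus the $O(\log n)$ indexed input tiles. I would let $a_n$ be the distinguished ``sky'' color that forces the north border to be precisely the input row (with input tiles threaded together in the correct order by their indices), $b_n$ the ``initial head'' color that forces $M_g$ to start in state $q_0$ at the west column, and $t_n$ the unique tile representing the halt state reached at the southeast corner.

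By NW-determinism, once $a_n$ and $b_n$ are fixed on the boundary of an $m \times m$ rectangle, the tiling is determined tile by tile, and completes with $t_n$ at the southeast corner exactly when the space--time diagram of $M_g$ on input $n$ fits inside that rectangle and halts precisely at its southeast corner. The padding in $M_g$ ensures the smallest such $m$ is $\geq g(n)$, giving items (1) and (2). If $g(n)$ is undefined, no such rectangle exists at all, which is consistent with item (2) being vacuous.

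The main obstacle is item (3), that $T_n$ tiles no configuration of $\Z^2$, since a divergent computation can naively spread over a quarter-plane. I would handle this by the standard ``framed simulation'' trick: design $U$ so that the sky color $a_n$ and the initial-head color $b_n$ can only appear on the outermost north edge and west edge of a computation region, and are incompatible with being extended further north or west. Any hypothetical tiling of $\Z^2$ would then need a bi-infinite row or column with no boundary marker, and the counter-machine simulation used in $U$ is rigged (as in the usual reversible-TM encoding) so that every row contains exactly one head tile whose position is a function of time, ruling out row-periodic tilings. This no-plane argument is essentially a repackaging of the one implicit in~\cite{To20}, but it is the step that most needs careful bookkeeping to coexist with the $O(\log\log n)$ alphabet constraint.
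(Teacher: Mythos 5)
There is a genuine gap, and it sits exactly where you flagged the ``main obstacle'': item (3). A tile set built as the bare space--time diagram of a machine always tiles the plane, no matter how you rig the boundary colors. The configuration consisting only of ``tape'' tiles with no head anywhere is locally consistent, and more generally no SFT can force at least one head in every row: take any valid tiling with a head, shift it so the head recedes to infinity, and pass to a limit --- by closedness the head-free limit configuration is still in the SFT. So your plan of making $a_n,b_n$ appear only on outer borders and ``rigging every row to contain exactly one head'' cannot be implemented by local rules; forbidding head-free rows is precisely the global constraint that the undecidability-of-the-domino-problem constructions exist to circumvent. The paper's proof does not use a framed simulation at all: it takes Kari's NW-deterministic version of the \emph{Robinson aperiodic tile set} with the machine embedded in it. The Robinson hierarchy forces every tiling of the plane to contain $(2^k-1)$-squares of all sizes, each hosting a freshly initialized simulation of the machine for about $k/2$ steps; since the machine halts, large squares are untileable and hence the plane is untileable, while an $m\times m$ rectangle with the prescribed border colors ($a_n,b_n$ taken from the borders of southeast-facing squares) is tileable exactly when the computation fits, which is where the lower bound $m\ge g(n)$ comes from. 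Some aperiodic scaffolding of this kind is unavoidable for item (3).

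A second, smaller but real error is the color count. Your input-tape tiles carry position indices $i\in\{0,\dots,\lceil\log_2 n\rceil-1\}$ on their edges so that they thread together in order; that requires $\Omega(\log n)$ \emph{distinct edge colors}, not $O(\log\log n)$ --- the bound in the lemma is on the cardinality $k_n=|C_n|$, not on the number of bits needed to write a color down. The paper instead encodes $n$ in the state set of the embedded machine ($M_n$ has $\lceil\log n\rceil+K$ states, which would likewise only give $O(\log n)$ colors) and then applies a reindexing trick: replace $g$ by $g\circ\exp$ and use the machine $M_{\lceil\log n\rceil}$ for index $n$, so that the machine has $O(\log\log n)$ states while still counting past $g(n)$. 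You would need some analogous compression; as written your construction does not meet the stated bound, which matters downstream because Lemma~\ref{lem:blowup} uses $\log\log m$ as the range over which the simulating counter machines enumerate edge colors.
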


\begin{proof}
  For $n \in \N$, let $M_n$ be a Turing machine that counts to $g(n)$, enters a special state $q$ that is not used otherwise, counts to $g(n)$ again and then halts.
  We construct the machine in such a way that it has $\lceil \log n \rceil + K$ states for some constant $K$, by having it first write $n$ in binary on its tape and then perform a uniform algorithm with input $n$.
  Let $t(n) \geq 2 g(n)$ be the number of steps $M_n$ takes before halting.
  
  In~\cite{Ka92}, Kari constructs a NW-deterministic version of the aperiodic Robinson tile set~\cite{Ro71} and embeds an arbitrary Turing machine in it.
  Its tilings consist of nested square patterns, called $(2^k-1)$-squares, each of which hosts a simulation of the embedded machine for approximately $k/2$ steps.
  If the machine halts in these $k/2$ steps, the pattern cannot be tiled correctly.
  
  Denote by $R_n$ this tile set with $M_n$ being the embedded machine.
  It can tile a southeast-facing $(2^k-1)$-square with $2^{k-1} \leq t(n)/2 < 2^k$.
  The north border of such a square always has the same color $a_n$, and its west border always has the same color $b_n$.
  The tile that simulates the special state $q$ is chosen as $t_n$; we can build $M_n$ and the simulation so that it always lies on the diagonal of a $(2^n-1)$-square.
  Then item~\ref{it:tiles-rect} is satisfied with the $m \times m$-pattern that spans the northwest corner of the $(2^k-1)$-square and the simulated state $q$ (which takes at least $g(n)$ steps to reach, whence $m \geq g(n)$).
  
  Since the machine $M_n$ eventually halts, $R_n$ does not tile the plane.
  Finally, the number of colors in $R_n$ grows linearly in the number of states and transitions of $M_n$, and we can make the latter grow as $O(\log \log n)$ by replacing $g$ with $g \circ \exp$ and using $M_{\lceil \log n \rceil}$ in place of $M_n$.
  Note that the sequence is allowed to have repeated values.
\end{proof}

The following lemma is a key ingredient of our construction.
It allows us to take a small grid and use it to enforce the existence of a large but still finite grid on another layer.
The idea is implicitly present in the proof of~\cite[Theorem~3]{To20}, but used in an ad hoc manner and not formalized.

\begin{lemma}[Blowing up grids]
  \label{lem:blowup}
  Let $g : \N \to \N$ be a total computable function.
  There exists a finite alphabet $A_g$ and a countable SFT $X_g \subset \grid \times \grid \times A_g^{\Z^2}$ such that for each $n \geq 1$, there exists a nonempty finite set $K \subset \N \cap [g(n), \infty)$ such that
  \[
    \set{ y \in \grid }{ \exists z : (\xgrid(n), y, z) \in X_g } = \set{ \sigma^{\vec v}(\xgrid(k)) }{ k \in K, \vec v \in n \Z^2 },
  \]
  and if $(x, y, z) \in X_g$ is such that $y$ has a grid of finite width, then so does $x$.
\end{lemma}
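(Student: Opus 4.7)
Apply Lemma~\ref{lem:det-tilesets} to $g$ to obtain the computable sequence $(C_n, T_n, a_n, b_n, t_n)_{n \in \N}$ of NW-deterministic Wang tile sets; each $T_n$ fails to tile the plane but tiles at least one $m \times m$ rectangle with $m \geq g(n)$ under the boundary conditions ``north = $a_n$, west = $b_n$, SE corner = $t_n$''. The strategy is to use the two grid layers as a two-level hierarchy: the inner grid, of width $n$, sets the scale on which individual tiles of $T_n$ are simulated via Example~\ref{ex:counter-machine-grids}, while the outer grid, of width $k$, marks off a self-contained $m \times m$ block of such tiles, with $k = m n$.

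The alphabet $A_g$ carries, in addition to the two grid layers, (i) the counter-machine simulation layer of Example~\ref{ex:counter-machine-grids} applied to the sequence $(T_n)_{n \in \N}$ and anchored to the inner grid, so that each inner-grid cell hosts the simulation of a single $T_n$-tile, and (ii) auxiliary signals that propagate the parameter $n$ along outer-grid lines and encode the boundary conditions of Lemma~\ref{lem:det-tilesets} at the edges of each outer-grid cell. The SFT rules stipulate that the outer grid's width is an integer multiple of the inner grid's width (enforced by comparing the $n$-signal on outer-grid lines to the inner-grid spacing); that each outer-grid cell contains a consistent $T_n$-tiling on its inner cells; that inner cells along the north and west boundaries of any outer cell have north color $a_n$ and west color $b_n$ respectively, and the SE-corner inner cell simulates $t_n$; and that the color-matching signals between adjacent simulated tiles are suppressed across outer-grid lines, so that different outer-grid cells tile independently.

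For correctness, suppose $(x, y, z) \in X_g$ with $x = \xgrid(n)$. Then $y$ must be a grid whose width $k$ is a multiple of $n$, say $k = m n$, whose crossings lie on $n \Z^2$, and within each outer-grid cell $T_n$ must tile an $m \times m$ rectangle with the prescribed borders; Lemma~\ref{lem:det-tilesets} then gives $m \geq g(n)$, hence $k \geq g(n)$. Conversely, for each valid $m$ the NW-determinism of $T_n$ yields a \emph{unique} such tiling, so the configuration exists and $K$ is nonempty. The set $K$ is finite because a tile set that tiles arbitrarily large squares tiles the plane (by a compactness/K\"onig's lemma argument), contradicting item~3 of Lemma~\ref{lem:det-tilesets}. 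Countability of $X_g$ reduces to the countability of each layer: $\grid$ is countable, Example~\ref{ex:counter-machine-grids} yields countably many simulation configurations, and once both grids have finite widths NW-determinism uniquely determines the $T_n$-tiling of each outer cell, leaving only countably many shifts and auxiliary choices. The final clause --- if $y$ has a grid of finite width then so does $x$ --- holds because the boundary conditions inside any bounded outer cell force an actual simulation of $T_n$, which in turn requires a finite-width inner grid to define the tile positions.

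The main obstacle will be the careful coupling of the inner and outer grids: we must propagate the parameter $n$ consistently along every outer-grid line (so that the same tile set $T_n$ is simulated in every outer cell), and arrange that the color-matching between adjacent simulated tiles is applied \emph{inside} each outer cell but \emph{not} across outer-grid lines, so that non-planarity of $T_n$ does not prevent the existence of global configurations.
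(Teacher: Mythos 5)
Your overall architecture matches the paper's: simulate the tile sets of Lemma~\ref{lem:det-tilesets} inside the cells of the first grid via counter machines, use the bordered $m\times m$ rectangles to force the second grid to exist, be aligned, and have width at least $g(n)$, and get finiteness of $K$ and countability from non-planarity and NW-determinism. However, there is a genuine gap in the quantitative coupling between the inner grid width and the index of the simulated tile set. You propose that an inner cell of side $n$ hosts the simulation of a single tile of $T_n$, with the parameter $n$ ``propagated along outer-grid lines.'' Neither half of this is implementable. The parameter $n$ is unbounded, so it cannot be carried by a signal over a fixed finite alphabet; and, more importantly, verifying $(t_e,t_n,t_w,t_s)\in T_n$ by a counter machine requires a counter holding the index $n$, which already forces the computation cone to have width at least $n$ and the simulation to occupy far more than $n$ rows (the paper's accounting is an exponential blow-up from computation steps to rows). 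So the check simply does not fit inside an $n\times n$ cell, and there is no mechanism in your construction forcing the simulated index to equal the inner grid width in the first place.

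The paper resolves exactly this point differently: the machine in each inner cell nondeterministically guesses a (much smaller) index $m$, and the relation between $m$ and the cell size $n$ is enforced not by reading $n$ but by a halting trick --- after performing the check in at most $h(m)$ steps the machine is required to run at least $h(m+1)$ further steps before halting, and the simulation may only be cut off after the check but before halting. This pins $m$ into a narrow window determined by $n$ (in particular $m\geq\log n$), at the cost that $T_m$ only guarantees rectangles of size $g'(m)$ for the simulated index $m$, not for $n$. That loss is then compensated by invoking Lemma~\ref{lem:det-tilesets} for the precomposed function $n\mapsto g(2^n)$ rather than for $g$ itself, so that $g'(m)=g(2^m)\geq g(n)$. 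Without this (or an equivalent) mechanism your proof does not establish the lower bound $k\geq g(n)$, nor even that the simulation is of a well-defined $T_n$. The rest of your argument (boundary conditions at outer-cell borders, suppression of matching across outer lines versus the paper's decorated tile set $S_m$ whose tilings are automatically grids, finiteness of $K$ by compactness, countability via NW-determinism, and the final clause via matching crosses of the two grids) is sound and essentially equivalent to the paper's.
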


\begin{proof}
  Let $(C_n, T_n, a_n, b_n, t_n)_{n \in \N}$ be the sequence of tile sets and tiles given by Lemma~\ref{lem:det-tilesets} for the function $n \mapsto g(2^n)$.
  Construct a new sequence of tile sets $(S_n)_{n \in \N}$ as follows.
  Take the grid shift $\grid$, and decorate each tile without a horizontal or vertical line with a tile of $T_n$ in all possible ways.
  Two adjacent decorated tiles must respect the adjacency rules of $T_n$.
  If the north neighbor of a decorated tile $t \in S_n$ is a horizontal line, then the north color of the $T_n$-decoration of $t$ must be $a_n$, and if the west neighbor of $t$ is a vertical line, then the west color of the decoration must be $b_n$.
  If the east neighbor of $t$ is a vertical line and its south neighbor is a horizontal line, then its decoration must be $t_n$.
  The colors of all other edges of the $T_n$-decorations are not constrained.
  This concludes the definition of $S_n$.
  
  The tile set $S_n$ does not admit tilings without a finite grid, since $T_n$ does not tile the infinite plane.
  By compactness, the set of grid sizes it admits is finite.
  It does admit at least one tiling with a grid of some width $m \geq g(2^n)$, but not of any width $m < g(2^n)$.
  Also, the decorations inside each grid cell form the same pattern in every tiling, since $T_n$ is NW-deterministic and the colors of the north and west borders of the grid cells are fixed.
  Thus the set of valid tilings over $S_n$ is countable.

  Similarly to Example~\ref{ex:counter-machine-grids}, we define $X_g \subset \grid \times \grid \times A_g^{\Z^2}$, where $A_g$ is an auxiliary alphabet, as an SFT where each grid cell of the first component contains a simulated computation of a counter machine $M$ on the $A_g$-component.
  We call the three components of $X_g$ the \emph{small grid layer}, the \emph{large grid layer} and the \emph{simulation layer}.
  
  The machine $M$ has six named counters $b, m, t_e, t_n, t_w, t_s$.
  It nondeterministically chooses some value for each counter (with $b \in \{0,1\}$), and checks  whether the quadruple $t = (t_e, t_n, t_w, t_s) \in \N^4$ is a tile of $S_m$.
  As in Example~\ref{ex:counter-machine-grids}, adjacent small grid cells synchronize these simulated counters so that each machine has the same value of $m$ and the colors of the simulated tiles match, so that a configuration containing a finite-width small grid will simulate a configuration of some $S_m$.
  
  After this, the machine $M$ checks that $b = 1$ when $t$ corresponds to a tile of $S_m$ with a horizontal grid line, and $b = 0$ otherwise.
  Let $h(m) \geq m$ be an upper bound for the number of computation steps required thus far for a fixed $m$ and any values of the other named counters.
  By Lemma~\ref{lem:det-tilesets} and the exponential slowdown in simulating a Turing machine by a counter machine, we can choose $h(m) = 2^{m^q}$ for some $q \in \N$.
  We may also assume that the machine always takes the same number of steps for the check when $m$ is fixed.
  For example, we may have it perform the check for all possible values of the counters $t_e$, $t_n$, $t_w$ and $t_s$ that are below the maximum value $\log \log m$ and ignore all results except the one with the actual counter values; the total number of steps is still $2^{n^{O(1)}}$, so this does not violate the previous assumption.
  
  After $M$ has finished the check, it runs for at least $h(m+1)$ more steps, e.g.\ by simply computing $h(m+1)$ and then counting to $h(m+1)$.
  After this it finally halts.
  Note that the time required to compute $h(m+1)$ is irrelevant, since we only need $M$ to run for at least $h(m+1)$ steps but not indefinitely.
  We allow the simulation of $M$ to be cut by a horizontal small grid line at any point after it has finished its check (that is, after $h(m)$ computation steps) but before it has halted.
  
  For each cross tile $t$ of the small grid, we require that $t$ is matched with a horizontal line on the large grid if and only if the northeast neighbor of $t$ contains a simulated $b$-counter with value $1$ on its $A_g$-component.
  Also, each cross tile of the large grid must be matched with a cross tile on the small grid.
  For finitely many widths of the small grid (including those for which the small grid cells are too small to host a computation), we can enforce the width of the large grid by explicit rules.
  This concludes the definition of $X_g$.
  
  We show that $X_g$ has the desired properties.
  Take any $n \geq 1$.
  Let $y$ and $z$ be such that $z' = (\xgrid(n), y, z) \in X_g$.
  If $n$ is large enough, every small grid cell (which has size $n \times n$) hosts a simulation of $M$ that computes a tile from the same set $S_m$ and enforces the adjacency rules to its neighbors.
  We must have $m \geq \log n$, since $M$ always runs for at least $m$ steps, which corresponds to at least $2^m$ rows in the tiling.
  
  By the second paragraph of this proof, the tiling of $S_m$ simulated on the small grid cells contains a grid of some finite width $w \geq g(2^m)$.
  Hence the small grid layer contains regularly spaced small grid cells that simulate cross tiles.
  These must be matched with horizontal lines on the large grid layer $y$.
  Thus $y$ has a finite-width large grid, that is, it is a shifted version of some $\xgrid(k)$ for $k \in \N$.
  Moreover, the simulated grid of $S_m$ has width $w \geq g(2^m) \geq g(2^{\log n}) = g(n)$, so the large grid of $y$ has width at least $n g(n) > g(n)$ for large enough $n$ (and small values of $n$ are handled separately).
  As the small and large grids must be aligned, we have $y = \sigma^{\vec v}(\xgrid(k))$ for some $k \geq g(n)$ and $\vec v \in n \Z^2$.
  Since the small grid $\xgrid(n)$ is $n$-periodic both horizontally and vertically, $\vec v$ can be chosen arbitrarily.
  Thus there exists some set $K \subset \{g(n), g(n)+1, \ldots\}$ such that the claimed equation holds.
  
  By compactness, the set $K$ must be finite.
  We show that for large enough $n$ it is also nonempty.
  For a given $m$, the simulated machine takes at most $h(m)$ steps to perform its computations, after which it runs for at least $h(m+1)$ more steps before halting, where $h(m) = 2^{m^q}$.
  It takes $C^t$ rows to simulate $t$ computation steps of the machine for a constant $C \geq 2$.
  This means that it is enough to find an $m$ such that $C^{h(m)} \leq n \leq C^{h(m+1)}$, so that the machine has enough space to perform the check in an $n \times n$ square, but not enough to halt.
  This is equivalent to
  \[
    m \leq \left( \log \left( \log_C n \right) \right)^{1/q} \leq m+1.
  \]
  Clearly, such an $m$ can always be found.
  Thus there exist $y$ and $z$ with $(\xgrid(n), y, z) \in X_g$, or in other words, $K \neq \emptyset$.

  Finally, suppose $(x, y, z) \in X_g$ is such that $y$ contains a finite-width grid.
  Since every cross of $y$ is matched with a cross in $x$, the latter must also contain a finite-width grid.
\end{proof}

\section{Proof of Theorem~\ref{thm:main}}
\label{sec:main-constr}

We now combine our puzzle pieces in order to form a proof of Theorem~\ref{thm:main}.
One direction follows directly from Theorem~\ref{thm:ccc}.

For the converse, suppose we are given a nondecreasing computable function $f : \N \to \N$ and a shift space $X \subseteq G(f)$ that satisfies the countable cover conditions.
Our goal is to construct a countable SFT cover for $X^\dag$.
In the first phase of the construction, we embed $X^\dag$ into a countably covered sofic shift $Y$ that is contained in $G(g)^\dag$ for some function $g : \N \to \N$ that dominates $f$.
After this, we will further restrict the SFT cover of $Y$, transforming it into a cover of $X$.

Since the construction is rather complex and contains many interconnected layers, we list here all the relevant layers, their roles, and the place where they are defined.
The reader may use this list as a quick reference.
\begin{itemize}
\item
  Base alphabet layer: $(A^\Z)^\dag$, target of the projection.
  Defined in Section~\ref{sec:embedding}.
\item
  Small grid layer: its width $n$ is at most the minimum distance between nonzero columns of the base alphabet layer.
  Defined in Section~\ref{sec:embedding}.
\item
  Width restriction layer: implements the restriction on the small grid layer.
  Defined in Section~\ref{sec:embedding}.
\item
  Large grid layer: has width $m \geq f(n)+2$ (using Lemma~\ref{lem:blowup}).
  Define $g(n)$ as the maximum width of this layer.
  Defined in Section~\ref{sec:embedding}.
\item
  Counting layer: restricts the number of nonzero columns of the base alphabet layer to be at most $m$ (using Lemma~\ref{lem:count-columns}).
  Defined in Section~\ref{sec:embedding}.
\item
  Simulation grid: has width at least $t(n) \cdot m$ (using Lemma~\ref{lem:blowup}), simulates a tiling over tile set $T_n$.
  Defined in Section~\ref{sec:simulation-layer}.
\item
  Probe interface layer of $T_n$ and probe implementation layer: together allow the simulated tilings over $T_n$ to probe the base alphabet layer.
  Defined in Example~\ref{ex:probe} and Section~\ref{sec:sets-Tn}.
\item
  Layer $L_{n,a,b}$ of $T_n$: responsible for checking the correctness of the base alphabet layer between nonzero columns $a$ and $b$.
  There is one for each $0 \leq a < b \leq f(n)-1$.
  Defined in Section~\ref{sec:sets-Tn}.
\item
  Computation sublayer of $L_{n,a,b}$: simulates a counter machine $C_n$ that controls the probing and correctness checks in $L_{n,a,b}$.
  Defined in Section~\ref{sec:sublayers}.
\item
  Anchor sublayer of $L_{n,a,b}$: has the form $\cdots 0 0 1^k 2 2 \cdots$, where the $1$-symbols cover the area between nonzero columns $a$ and $b$ of the base alphabet layer.
  Defined in Section~\ref{sec:sublayers}.
\item
  Small restart grid sublayer of $L_{n,a,b}$: has width $1 \leq w \leq k$, where $k$ is given by the anchor sublayer.
  Defined in Section~\ref{sec:restarts}.
\item
  Large restart grid sublayer of $L_{n,a,b}$: has width at most $h'(w)$ (using Lemma~\ref{lem:blowup}), where the simulated computation of $C_n$ fits on $h'(k)$ rows.
  Periodically restarts the computation of $C_n$.
  Defined in Section~\ref{sec:restarts}.
\end{itemize}

\subsection{Embedding $X^\dag$ in a countably covered $G(g)^\dag$}
\label{sec:embedding}

We begin to construct a countable SFT $Z$ and a projection $\pi : Z \to A^{\Z^2}$ such that $Y = \pi(Z)$ satisfies $G(f)^\dag \subseteq Y \subseteq G(g)$ for some function $g$.
This SFT consists of several layers, the first of which is the vertically constant $A$-layer $(A^\Z)^\dag$, called the \emph{base alphabet layer}.
We define $\pi$ as the projection to this layer.

The second layer is a copy of $\grid$, which we call the \emph{small grid layer}.
The third layer is the \emph{width restriction layer} given by the tiles visible in Figure~\ref{fig:alph-restr} and their obvious matching rules.
Its thick horizontal lines must coincide with those of the small grid layer, and its vertical lines must coincide with the nonzero columns of the base alphabet layer.
It restricts the width of the small grid layer to be at most the minimum distance between two nonzero columns of the base alphabet layer.
In particular, if there are at least two nonzero columns, then the small grid layer will necessarily contain a grid of finite width (and the width can be any integer between $1$ and the minimum distance).
Note that the width restriction layer is completely determined by the previous layers in the case that there are at least two nonzero columns in the base alphabet layer, and has countably many configurations otherwise.

\begin{figure}[htp]
  \centering

  \begin{tikzpicture}[scale=0.7]

    \pgfmathsetmacro{\ymax}{9}
    \pgfmathsetmacro{\xmax}{16}

    \begin{scope}
      \clip (0,0) rectangle (\xmax, \ymax);
      \foreach \y in {-4,0,4,8}{
        \foreach \x in {1, 5, 11}{
          \draw [fill=black!20] (\x+0.5,\y+0.5) -- ++(4,4) -- ++(0,-4);
        }
        \draw [very thick] (0, \y+0.5) -- (\xmax, \y+0.5);
      }
    \end{scope}

    \draw [dotted] (0,0) grid (\xmax,\ymax);
    
  \end{tikzpicture}
  
  \caption{A sample configuration of the width restriction layer.}
  \label{fig:alph-restr}
\end{figure}

Next, we use Lemma~\ref{lem:blowup} to add another copy of $\grid$, called the \emph{large grid layer}, and couple its width to that of the small grid layer using the function $f + 2$.
If the small grid layer has a grid of some finite width $n \in \N$, now the large grid layer must have a grid of finite width $m \geq f(n) + 2$.
We choose $g(n)$ as the maximum value of $m$, the existence of which is guaranteed by Lemma~\ref{lem:blowup}; in other words, $g(n) = \max K$, where $K \subset [f(n)+2, \infty)$ is the finite set given by the Lemma.

Finally, we use Lemma~\ref{lem:count-columns} to add a counting layer that couples the large grid layer with the base alphabet layer.
It has the effect of constraining the number of nonzero columns on the base alphabet layer to be at most $m-2$, but places no other restrictions on it.

\subsection{Correctness of the embedding}

By the construction and the lemmas used, $Z$ is a countable SFT.
It remains to be shown that $G(f)^\dag \subseteq Y \subseteq G(g)^\dag$.
For the first inclusion, let $y \in G(f)^\dag$.
If $y$ contains at most one nonzero column, then the other layers of $Z$ can be filled by degenerate configurations.
If it contains two or more nonzero columns, let $n > 0$ be the minimum distance between any two of them.
We can use $\xgrid(n)$ as the small grid layer, and then the width restriction layer can be filled legally.
By Lemma~\ref{lem:blowup}, we can use $\xgrid(m)$ for some $m \geq f(n)+2$ as the large grid layer, and since $y \in G(f)^\dag$, this does not violate the coupling given by Lemma~\ref{lem:count-columns}.
Hence $y \in Y$.

For the other inclusion, let $y \in Y$ be arbitrary, and let $z \in Z$ be such that $y = \pi(z)$.
If the small grid layer of $z$ does not contain a finite-width grid, then $y$ contains at most one nonzero column, so $y \in G(g)^\dag$.
Suppose then that the small grid layer contains a grid of width $n$.
Then Lemma~\ref{lem:blowup} guarantees that the large grid layer contains a grid of width $m$ for some $m \in K$, which here implies $m \leq g(n)$ due to our choice of $g$.
Lemma~\ref{lem:count-columns} states that there are at most $m-2$ nonzero columns in $y$.
The width restriction layer ensures that the distance between any two such columns cannot be less than $n$.
Hence $y \in G(g)^\dag$.

\subsection{The simulation layer}
\label{sec:simulation-layer}

We now begin the second phase of the construction.
This involves adding new layers to the SFT cover $Z$ to produce a new countable SFT $Z'$ with $\pi(Z') = X^\dag$.
We first add a new copy of $\grid$ called the \emph{simulation grid}, on which we simulate a sequence of tile sets $(T_n)_{n \in \N}$ as in Example~\ref{ex:probe}; they will have access to the base alphabet layer via the kind of interface defined in that example and are responsible for checking its correctness.
We will constrain $n$ to be equal to the width of the small grid layer.

The size of the simulation grid is at least $t(n) \cdot m$ for a computable function $t : \N \to \N$ growing fast enough that the counter machines always have enough time to correctly simulate tiles of $T_n$ (recall that $m$ is the size of the large grid); we enforce this using Lemma~\ref{lem:blowup}.
We require that the small grid, large grid and simulation grid all be aligned, so that the counter machines running on the simulation grid cells can access $n$ and the contents of the counting layer.

\subsection{The simulated tile sets $T_n$}
\label{sec:sets-Tn}

We define the simulated tile sets $T_n$ for $n \in \N$ as SFTs, which can then be recoded into sets of Wang tiles.
The set $T_n$ contains $k_n = \binom{f(n)}{2}$ Wang tile layers, one layer $L_{n,a,b}$ for each two-element subset $\{a, b\} \subset \{1, \ldots, f(n)\}$ with $a < b$.
We identify these numbers with the nonzero columns of the base alphabet layer as they are counted from left to right by the counting layer; the number of a nonzero column is the height of the shaded region on the counting layer at that position (see Figure~\ref{fig:count-conf}).
In particular, if there are less than $f(n)$ nonzero columns, the count may not begin from 1.
The layer $L_{n,a,b}$ is responsible for probing the contents of the base alphabet layer between the $a$th and $b$th nonzero columns and verifying that this part does not contain forbidden patterns of $X$.

In addition to these layers, the set $T_n$ contains one probe interface layer over the alphabet $B$ of Example~\ref{ex:probe}, which is used to probe the base alphabet layer.
Each layer $L_{n,a,b}$ will have access to this common layer, and through it the contents of the base alphabet layer and the small grid.
For this purpose, we add a probe implementation layer to $Z'$ and couple it to the base alphabet and simulation layers as in Example~\ref{ex:probe}.

The probing interface of Example~\ref{ex:probe} allows only one symbol per row to be probed by a simulated tiling.
Hence, we cannot allow the $k_n$ layers of $T_n$ to access the base alphabet layer independently of each other.
For this reason, exactly one of the layers $L_{n,a,b}$ is \emph{active} on a given horizontal row, and the others are \emph{inactive}.
For a pair of adjacent horizontal rows over $T_n$, the layer that is active on the bottom row must follow the adjacency rules of its layer between these two rows, while the remaining $k_n-1$ layers must be identical across the two rows.
The role of the active layer is passed on cyclically, so that on a height-$k_n$ horizontal strip of tiles, each layer is active on exactly one row.
Hence, each layer is ``stretched'' vertically by a factor of $k_n$.

\subsection{The layers $L_{n,a,b}$ of the simulated tilings}
\label{sec:sublayers}

Each of the $k_n$ layers $L_{n,a,b}$ of $T_n$ consists of several sublayers.
The first sublayer, called the \emph{computation sublayer}, contains a simulation of a deterministic bidirectional counter machine $C_n$, which is periodically restarted.
Its purpose is to repeatedly probe the base alphabet layer of $Z$ between the $a$th and $b$th nonzero column and determine whether it is in $X^\dag$.

The second sublayer, called the \emph{anchor sublayer}, is a vertically constant layer over the alphabet $\{0,1,2\}$, in which the horizontal patterns $10$, $20$ and $21$ are forbidden.
The machine $C_n$ has access to the anchor sublayer.
We will force the $1$-symbols on the anchor sublayer of $L_{n, a, b}$ to span exactly the region delimited by the $a$th and $b$th nonzero columns of the base alphabet layer, so that the counter machine $C_n$ of the computation sublayer can be initialized near the $a$th column and check the configuration between it and the $b$th column.

The positions of the $1$-symbols of the anchor layer are determined by the counter machines on the simulation grid as follows.
The simulation grid is aligned with the small and large grids, and the counter machine in each cell of the simulation grid has access to the contents of them and the counting layer.
In particular, it can read the width $n$ of the small grid, as well as the position of the westmost dashed vertical line on the synchronization layer in its grid cell.
Denote by $e \geq 1$ the distance of the dashed line from the west border of the grid cell; recall that this distance is incremented between two adjacent small grid cells if and only if the western one contains a nonzero column.
The machine checks that $e \leq f(n)$, rejecting if not.
Then, on the anchor sublayer of each layer $L_{n,a,b}$ it places a $0$-symbol if $e < a$, a $1$-symbol if $a \leq e \leq b$, and a $2$-symbol if $b < e$.
In this way we guarantee that every configuration contains at most $f(n)$ nonzero columns, and that each pair of these columns has a corresponding simulated layer on which the space between them is highlighted with $1$-symbols.

\subsection{The counter machine $C_n$ of $L_{n,a,b}$}
\label{sec:machine-Cn}

Recall that the layer $L_{n,a,b}$, and hence both its anchor sublayer and the machine $C_n$, are part of the tiling over $T_n$ simulated on the width-$h(n)$ cells of the simulation grid.
We now explain how the machine $C_n$ of $L_{n,a,b}$ accesses the contents of the base alphabet layer.
It has two special counters $c_1$ and $c_2$.
The (possibly empty) segment of $1$-symbols of the common probe interface layer of $T_n$ must always coincide with the portion of the computation cone of $C_n$ that is between the ends of these counters on the active layer.
When the zig-zag head of the active layer steps on the $A \times \{0,1\}$-symbol of the probe interface layer, it can store it as part of its state.

The machine $C_n$ behaves as follows.
First, it determines the number $m$ of $1$-symbols on the anchor sublayer of $L_{n,a,b}$.
Then, using the probe interface layer via its special counters $c_1$ and $c_2$, it probes for the width $h(n)$ of the simulation grid\footnote{Note that $h(n)$ is analogous to the width $m$ in Example~\ref{ex:probe}, where we explain how the simulated tile set can obtain it using the probe interface layer.} and then the full contents of the base alphabet layer on each simulation grid cell that simulates a $1$-symbol on the anchor layer.
In this way it receives a word $w \in A^{m \cdot h(n)}$.
It checks whether $|w|_{\neq 0} \leq f(d)$, where $d$ is the minimum distance between two nonzero symbols in $w$, rejecting (and producing a tiling error) if this is not the case.

Next, $C_n$ checks whether ${}^\infty 0 w 0^\infty \in X$, which is possible since $X$ satisfies the countable cover conditions.
If this holds, then $C_n$ halts and accepts.
If ${}^\infty 0 w 0^\infty \notin X$, then by compactness there exists a number $p \geq 0$ such that $0^p w 0^p \notin \lang(X)$, which we can also compute since $X$ is effectively closed.
After this, the machine probes the $p$ symbols to the left and right of $w$ on the base alphabet layer.
If the result is $0^p w 0^p$, then $C_n$ rejects and produces a tiling error; otherwise, it accepts.

\subsection{Restarting the machine $C_n$: the restart grids}
\label{sec:restarts}

We add two more sublayers to each of the $k_n$ layers of $T_n$, called the \emph{small and large restart grids}, the purpose of which is to periodically restart the computation of the machine $C_n$ on the computation layer.
Namely, if we allowed the machine to run indefinitely, then by shifting vertically we would obtain limit configurations where no computation is ever initialized.

As the names suggest, both sublayers are copied of the grid shift $\grid$.
The small restart grid is coupled with the run of $1$-symbols on the anchor sublayer similarly to the width restriction layer: if $0 1^m 2$ occurs on the anchor sublayer, the small restart grid must have a grid of width between $1$ and $m$.
The large restart grid is coupled to the small restart grid using Lemma~\ref{lem:blowup} with a computable function $h' : \N \to \N$ such that the simulated counter machine $C_n$ will always take at most $h'(m)$ rows of tiles to finish its computation if the number of $1$-symbols on the anchor layer is $m$; again, such a function clearly exists.
A vertical line of the large restart grid erases whatever computation was taking place on the computation sublayer, and initializes a new computation on each occurrence of the word $01$ on the anchor layer (of which there can be at most one).

Finally, there may be symbols $a \in A_0$ such that ${}^\infty 0 a 0^\infty \notin X$.
These cannot be completely handled like the words $w$ such that ${}^\infty 0 w 0^\infty \notin X$ at the end of Section~\ref{sec:machine-Cn}, since they may occur as rows of the base alphabet layer in configurations without a finite-width small grid (and hence without simulations of any $T_n$ and $C_n$).
For each such symbol, there necessarily exists $p \geq 0$ such that $0^p a 0^p \notin \lang(X)$, and we explicitly forbid this horizontal pattern from the base alphabet layer of $Z'$.
This concludes the construction of $Z'$.

\subsection{The SFT $Z'$ is countable}

We claim that $Z'$ is countable.
As $Z$ was already proved countable, it suffices to fix all the layers of $Z$ (the base alphabet layer, the small and large grids, and their width restriction and counting layers) and show that the remaining layers have countably many choices.

If the small grid layer contains no finite-width grid, then neither does the simulation grid layer, by the last claim on Lemma~\ref{lem:blowup}.
Hence, it contains at most one computation cone of a counter machine, which has a countable number of possible computation paths, plus a countable number of choices for other degenerate signals.
Suppose then that the small grid layer contains a grid of width $n$, so that the large grid layer contains a grid of some width $m \geq f(n)+2$.
Then the simulation grid layer contains a grid of width at least $t(n) \cdot m$, on which a tiling of $T_n$ is simulated (since the counter machines in the grid cells can access the value of $n$).

On each of the $k_n$ layers $L_{n,a,b}$ of $T_n$, the anchor layer is completely determined by the previously described layers.
If the anchor layer does not contain a finite run of $1$-symbols, then the small restart grid does not contain a finite-width grid, and neither does the large restart grid by Lemma~\ref{lem:blowup}.
Hence the simulation sublayer contains at most one position where a computation of $C_n$ is initialized, and since $C_n$ is deterministic, this (together with degenerate computation cones) constitutes a countable number of choices.
On the other hand, if the anchor layer contains a pattern $0 1^k 2$, then the small restart grid contains a grid of width at most $k$, and the large restart grid contains a grid of width at most $h'(m)$.
The latter grid periodically restarts the computation of $C_n$ on the same column.
Hence the number of choices is again countable.

\subsection{The SFT $Z'$ projects onto $X^\dag$}

We now prove $\pi(Z') = X^\dag$.
Take any $x \in X^\dag$ and use it as the base alphabet layer.
Since $x \in G(f)^\dag$, we can safely fill all the layers of $Z$, in particular putting a grid of finite width $n$ on the small grid layer, equal to the size of the smallest gap on the base alphabet layer, and one of some width $m$ on the large grid layer.
We choose to fill the counting layer so that the dashed horizontal lines are directly above the horizontal grid lines on the left end of the configuration.
On the simulation grid layer we can safely put a grid of finite width at least $t(n) \cdot m$ by Lemma~\ref{lem:blowup}.
By our choice of the function $t$, the counter machines inside these grid cells have enough time to correctly simulate a tiling of $T_n$.

Consider a layer $L_{n,a,b}$ of $T_n$.
The anchor sublayer of $L_{n,a,b}$ is determined by the previous layers.
If it does not contain a finite run of $1$-symbols, then the remaining sublayers can safely be filled with degenerate, uniform configurations.
If it does contain a run $0 1^k 2$, we can choose a grid of width $k$ as the small restart grid, and a grid of some width at least $h'(k)$ as the large restart grid.

This leaves the computation sublayer (and the common probe interface layer, which is determined by the previous layers and only constrains the computation sublayers).
On this sublayer we are forced to initialize computations of the machine $C_n$ on the intersections of the horizontal grid lines of the large restart grid and the leftmost $1$-column on the anchor sublayer.
Since the base alphabet layer comes from $x \in X$, the machine $C_n$ cannot reject; the only way to do so would be to find too many nonzero columns or a forbidden word of the form $0^p w 0^p$, neither of which can occur on the base alphabet layer.
Hence we have produced a valid tiling of $Z'$.

Conversely, take any $z \in Z'$; we claim that its base alphabet layer is in $X^\dag$.
If the small grid layer is degenerate, then so is the large grid layer, and the base alphabet layer may contain at most one nonzero column.
Since we explicitly forbade any ${}^\infty 0 a 0^\infty \notin X$ from occurring on the base alphabet layer, this case is handled.

Suppose hence that the small grid layer contains a grid of some width $n$.
Then, as in the argument for the countability of $Z'$, we must have finite-size grids on the large grid and simulation grid layers, the latter of which must simulate a tiling of $T_n$, which in turn consists of the $k_n$ layers $L_{n,a,b}$ plus the shared probe interface layer.
The simulation layer also restricts the vertical dashed lines on the counting layer of $Z$ to be at height at most $f(n)$, and assigns to each nonzero column of the base alphabet layer a unique successive number in $\{0, \ldots, f(n)\}$.
Let these numbers span a sub-interval $\{a, a+1, \ldots, b\}$.

We restrict our attention to the layer $L_{n,a,b}$ of the simulated $T_n$-tiling.
The run of $1$-symbols of its anchor sublayer is finite and spans the entire nonzero part of the base alphabet layer.
Thus, the restart grids contain grids of finite size, and the computation sublayer contains an infinite number of restarted computations of $C_n$ that have enough time to complete by accepting.
The machine $C_n$ reads the entire nonzero part of the base alphabet layer as the word $w$.
If we had ${}^\infty 0 w 0^\infty \notin X$, then $C_n$ would have to find a nonzero column within a finite distance to the left or right of $w$, which is impossible, since $w$ spans all the nonzero columns.
Hence we have ${}^\infty 0 w 0^\infty \in X$, as required.
This finishes the proof of Theorem~\ref{thm:main}.

\subsection{A modification}

With minor modifications to the proof of Theorem~\ref{thm:main} we can extend it to a class of one-dimensional shift spaces that includes all subshifts of both one-dimensional sofic shifts and gap shifts.
Namely, fix an alphabet $A$, a computable function $f : \N \to \N$ and a bound $b > 0$.
In each configuration of the gap width shift $G(f, \{0,1\})$, replace each symbol within distance $b$ from a $1$-symbol with an arbitrary symbol of $A$, and then replace each remaining (finite or infinite) run of $0$-symbols with some equal-length periodic pattern over $A$ with period at most $b$.
Each of these replacements is done independently of the others.
Denote the set of all possible resulting configurations by $G(A, f, b)$.

We claim that subshifts of such $G(A, f, b)$ also satisfy the converse of Theorem~\ref{thm:ccc}.
In the proof, we add a new $\{0,1\}$-layer on which we place a configuration of $G(f, \{0,1\})$ from which the $A$-layer could have been produced.
This can be verified with only local rules.
We use this $\{0,1\}$-layer in place of the $A$-layer in every part of the construction, except that we give the $C_n$-machines access to the $A$-layer in order to verify its correctness.
The algorithm of $C_n$ is modified suitably.

\section{Proof of Theorem~\ref{thm:main2}}
\label{sec:main2}

In this section we prove Theorem~\ref{thm:main2}.
Assume thus that $f : \N \to \N$ is nondecreasing and upper semicomputable, and satisfies $f(n) < 2^{\sqrt{n}}$ for all large enough $n$.
We construct a countable SFT $X$ that factors onto $G_2(A,f)$.

\subsection{Small $n$}

We first show that small values of $n$ can be handled separately.
Suppose that $f(n) < 2^{\sqrt{n}}$ holds when $n > M$.
To handle all $1 \leq n \leq M$, we use the alphabets $A_n = \{(n,k,d,a) \mid 0 \leq k \leq f(n), d \in \{-1,1\}, a \in A\}$.
The idea is that in a configuration over $A_n$, the $k$-value keeps track of the number of nonzero symbols seen when traversing the rows one by one, alternating left-to-right and right-to-left.
The $d$-value is constant on each row and alternates on each column, encoding the direction of travel.

Suppose that $x \in X$ and $x_{\vec v} = (n,k,d,a) \in A_n$.
Then all neighbors of $\vec v$ must also have symbols of $A_n$.
We also require $x_{\vec v + (0,1)} = (n,k',-d,a')$ with $k' \geq k$, and $x_{\vec v + (d,0)} = (n, k'', d, a'')$ with $k'' \geq k$, and $k'' > k$ if $a \neq 0$.
Finally, if $a \neq 0$, we require that no other symbol at distance less than $n$ has a nonzero $a$-component.
The factor map is defined as $(n,k,d,a) \mapsto a$ on $A_n$.

We claim that $X \cap A_n^{\Z^2}$ maps onto the subshift $Y_n$ of $G_2(A,f)$ containing all configurations with at most $f(n)$ nonzero symbols, no two of which are at distance less than $n$ (note that $G_2(A,f) = \bigcup_{n \geq 0} Y_n$).
Let first $x \in Y_n$, so that the total number of nonzero symbols is at most $f(n)$.
We construct a configuration $y \in X \cap A_n^{\Z^2}$ that maps to $x$.
Consider the linear order on $\Z^2$ given by $(i,j) \leq_\pm (i',j')$ if either $j < j'$ or $j = j'$ and $(-1)^j i \leq (-1)^j i'$.
For $\vec v = (i,j) \in \Z^2$, let $y_{\vec v} = (n, k, (-1)^j, x_{\vec v})$, where $k$ is the number of positions $\vec w \leq_\pm \vec v$ such that $x_{\vec w} \neq 0$.
Then $y \in X$: if $y_{\vec v} = (n, k, d, a)$, then $\vec v \leq_\pm \vec v + (0,1)$ and $\vec v \leq_\pm \vec v + (d,0)$, so the $k$-values are valid, and the last condition holds since $x \in Y_n$.

Conversely, suppose $y \in X \cap A_n^{\Z^2}$ and let $x \in A^{\Z^2}$ be its image.
By translating, we may assume that $y_{(0,0)} = (n, k, 1, a)$, and then the $d$-value of $y_{(i,j)}$ is $(-1)^j$ for all $(i,j) \in \Z^2$.
By the constraints of $X \cap A_n^{\Z^2}$, the $k$-value of $y_{\vec v}$ is nondecreasing with respect to the linear order ${\leq_\pm}$, and has to properly increase at every $\vec v$ such that $x_{\vec v} \neq 0$.
In particular, the total number of nonzero symbols in $x$ is at most $f(n)$.
Also, no two nonzero symbols in $x$ are at distance less than $n$.
Hence $x \in Y_n$.

The subshift $X \cap A_n^{\Z^2}$ is also countable, since every configuration defines a linear order on $\Z^2$ out of two possibilities, and along this order the value of $k$ is nondecreasing.

\subsection{The layers}

Let us now handle large values of $n$.
Configurations of this part of $X$ consist of five layers: the \emph{base alphabet layer} which is simply $A^{\Z^2}$, the \emph{base grid}, the \emph{binary counter layer}, the \emph{distance layer}, and the \emph{computation layer}.
The factor map projects every configuration to its base alphabet layer.

The base grid is a copy of the grid shift $\grid$.
Its purpose is to provide an ``anchor'' for the rest of the construction, and its width will be approximately equal to the minimum max distance $d$ between two nonzero symbols on the base alphabet layer.
The binary counter layer counts the number $N$ of nonzero symbols on the base alphabet layer, and the distance layer measures the exact minimum distance $d$.
Both of them transmit their respective information to the computation layer, which simulates a computation of a counter machine that ensures $N \leq f(d)$.

\subsection{The base grid}

The base grid is a copy of $\grid$, with some additional colors overlaid on the interior tiles of grid cells.
Each grid cell is completely colored with either $0$ or $1$, depending on whether it contain a nonzero symbol on the base alphabet layer.
This is enforced as follows.
If a grid cell $C$ is colored with $1$, then we place inside it a colored rectangle whose southwest corner coincides with that of $C$ (the two colors ``rectangle interior'' and ``rectangle exterior'' are overlaid on the color $1$).
The northeast corner of the rectangle must contain a nonzero symbol on the base alphabet layer.
Conversely, each nonzero symbol on the base alphabet layer must be at the northeast corner of such a rectangle.
If $C$ is colored with $0$, it cannot contain such a rectangle.

We further require that two $1$-colored grid cells cannot share a border or corner.
This ensures that if the minimum distance between two nonzero symbols on the base alphabet layer is $d$, then the base grid has width at most $d$, and on the other hand we can place a valid base grid of width $\lfloor d/2 \rfloor$.

In the rest of the construction we will define ``signals'' traveling inside cells of the base grid.
Such a signal is always implemented as the border between two distinctly colored regions of a grid cell, using two new colors that are overlaid on any already existing colors and do not interact with them, unless otherwise noted.
The purpose of this is to ensure countability by avoiding limit configurations that contain nothing but infinitely many parallel signals.

\subsection{Counting nonzero symbols: the binary counter layer}

The binary counter layer is constructed on top of the base grid.
We will superimpose a binary counter on each row of the base grid, which is incremented at each position where the grid cell contains a nonzero symbol.
There is another binary counter on each column of the base grid, on which we will produce the cumulative sum of the counters of the rows crossing it.
We cannot encode the counter at a rate of one cell per bit, as that would result in an uncountable number of limit configurations, so the number of cells per bit has to grow with the length of the counter.
The bound $f(n) < 2^{\sqrt{n}}$ comes from this part of the construction.
With a more efficient encoding we could loosen the bound, but even at one cell per bit (at which point we lose countability) it would remain as $f(n) \leq 2^n$.

On each grid cell $C$ of the base grid we superimpose a secondary square grid of $k \times k$ cells on the binary counting layer, for some $1 \leq k \leq n$.
The southwest corner of $C$ is aligned with a grid cell of the secondary grid, and the secondary grids of adjacent cells of the base grid must have equal width.
The northmost row and eastmost column of the secondary grid might be truncated.

We also enforce that if the secondary grid cells (except those on the north and east borders of $C$) have width $w$, then $k \leq 5w$.
This can be achieved by sending a horizontal signal from the southwest corner of $C$ to the west and incrementing it whenever it has crossed five vertical borders of the secondary grid, similarly to Lemma~\ref{lem:count-columns}.
See Figure~\ref{fig:secondary-grid} for an illustration.
This ensures $w \leq \sqrt{n/5}$, so that $k \leq \sqrt{5n}$.
On the other hand, we can always safely choose $w = \lfloor \sqrt{n/5} \rfloor$, and then $k = \lceil n / w \rceil \geq \sqrt{n/5}$.

\begin{figure}[htp]
  \centering
  \begin{tikzpicture}[scale=0.7]

    \pgfmathsetmacro{\gap}{0.7}
    \pgfmathsetmacro{\gb}{8-\gap}
    \pgfmathsetmacro{\fgap}{5*\gap}
    \pgfmathsetmacro{\rb}{8-5*\gap}
    \foreach \dx in {1,9}{
      \begin{scope}
        \clip (\dx,1) rectangle ++(8,8);
        \foreach \x [count=\n] in {0,\gap,...,8}{
          \foreach \y [count=\n] in {0,\gap,...,8}{
            \fill [black!20] (\dx+\x,1+\y) -- ++(\gap,0) -- ++(0,\gap);
          }
        }
      \end{scope}
      \fill [black!60,opacity=0.5] (\dx,1) -- ++(0,0.15) -- ++(\fgap,0) -- ++(0,0.15) -- ++(\fgap,0) -- ++(0,0.15) -- (\dx+8,1.45) -- ++(0,-0.45);
      \foreach \x [count=\n] in {0,\gap,...,8}{
        \draw (\dx+\x,1) -- (\dx+\x,9);
        \draw (\dx,1+\x) -- (\dx+8,1+\x);
      }
      \foreach \x [count=\n] in {0,\fgap,...,\rb}{
        \draw (\dx+\x,1+0.15*\n) -- ++(5*\gap,0);
      }
      \draw (\dx+2*\fgap, 1.45) -- (\dx+8, 1.45);
    }

    \draw [very thick] (0,1) -- (18,1);
    \draw [very thick] (0,9) -- (18,9);
    \draw [very thick] (1,0) -- (1,10);
    \draw [very thick] (9,0) -- (9,10);
    \draw [very thick] (17,0) -- (17,10);
    
  \end{tikzpicture}
  \caption{The secondary grids of two adjacent base grid cells.}
  \label{fig:secondary-grid}
\end{figure}

Each cell of the secondary grid is colored with an element of $\{0,1\}^2$, that is, two bits.
The first bits encode, in binary, some number $0 \leq a_C < 2^k$ on each vertical column of the secondary grid of $C$, and the second bits encode a number $0 \leq b_C < 2^k$ on each horizontal row.
Let $C'$ be the west neighbor of $C$ and $C''$ the east neighbor of $C$.
If $C$ is colored with $1$ (that is, contains a nonzero $A$-symbol), then $a_C = a_{C'}+1$, and otherwise $a_C = a_{C'}$.
Diagonal signals, implemented as additional colors of the grid cells of the secondary grid, transmit the binary representation of $a_C$ to the southmost row of the secondary grid, where a local rule ensures $b_C = b_{C''} + a_C$.
Both computations can be implemented with simple transducers, using additional colors to encode the carry.
The results of the computations cannot reach $2^k$ or beyond: if the most significant bit produces a carry, a tiling error results.

We also check whether $a_C \neq 0$ and mark it as part of the background color of $C$.
A base grid cell $C$ that satisfies $a_C \neq 0$ is called \emph{active}.
Note that if a cell is active, then so are all cells to the east of it on the same row.

Now each row $R$ of the base grid, when traversed west-to-east, carries a binary counter with some initial value $a_R$ and final value $a'_R = a_R + N_R$, where $N_R$ is the number of nonzero symbols within $R$.
Likewire, each column $C$, when traversed south-to-north, carries a counter with some initial value $a_C$ and final value $a'_C = a_C + \sum_R b_R$, where $b_R$ is the counter value of row $R$ at the position where it crosses $C$.
Since all these values are less than $2^k$, the total number $N$ of nonzero symbols in the configuration is also less than $2^k$.
Since $N \leq 2^{\sqrt{d}}$ holds by assumption and we can always choose $k \geq \sqrt{5n}$ and $n = \lfloor d/2 \rfloor$, this does not unnecessarily restrict $N$.
Furthermore, if we choose $a_R = 0$ for every row $R$, then the final value of every column $C$ far enough to the east is $a'_C = N$, and no counter has a value higher than $N$.

\subsection{Finding the minimum distance: the distance layer}

Via the binary counter layer, the computation layer has access to the total number $N$ of nonzero symbols on the base alphabet layer (or more precisely, a number that is at least $N$).
We will now transmit to it the minimum distance $d$ between two nonzero symbols.
This will be achieved by an arrangement of signals on the distance layer.
Figure~\ref{fig:dist-gadget} contains a diagram of the construction, which the reader should consult to better understand the explanation.

\begin{figure}[htp]
  \centering
  \begin{tikzpicture}[scale=1.7]

    \pgfmathsetmacro{\nzx}{3.6}
    \pgfmathsetmacro{\nzy}{0.8}
    \pgfmathsetmacro{\hgt}{1.7}
    \pgfmathsetmacro{\dq}{4.6}

    \draw [dashed,fill=black!20] (\nzx-\hgt,\nzy) rectangle ++(2*\hgt, \hgt);
    \draw [fill=black!40] (\nzx,\nzy) -- ++(3-\nzx,\nzx-3) -- ++(\hgt,0) -- (\nzx+\hgt,\nzy) -- cycle;

    \foreach \x in {0,...,7}{
      \draw [very thick] (\x,0) -- ++(0,4);
    }
    \foreach \y in {0,...,4}{
      \draw [very thick] (0,\y) -- ++(7,0);
    }

    \fill (\nzx,\nzy) circle (0.04cm);
    \fill (\nzx-0.3,\nzy+\hgt) circle (0.04cm);

    \draw [dashed] (\nzx,\nzy) -- ++(\hgt, \hgt);
    \draw [dashed] (\nzx,\nzy) -- ++(-\hgt, \hgt);



    \draw [very thick, dotted] (\dq,0) -- ++(0,4);

    \draw [->] (\nzx-\hgt-0.1,\nzy) |- (\nzx-0.3,\nzy+\hgt+0.1);

    \draw [decorate,decoration={brace,amplitude=0.2cm}] (\nzx+\hgt+0.2,\nzy+\hgt) -- node [midway, right=0.15cm] {$p$} ++(0,-\hgt);
    \draw [decorate,decoration={brace,amplitude=0.2cm}] (\nzx-\hgt,\nzy+\hgt+0.3) -- node [midway, above=0.25cm] {$2p+1$} ++(2*\hgt,0);

    \node [below left] at (\nzx,\nzy) {$\alpha$};
    \node [below right] at (\nzx+\hgt,\nzy) {$\beta$};
    \node [above right] at (\nzx+\hgt-\nzx+3,\nzy+\nzx-3) {$\gamma$};
    \node [above right] at (3,\nzy+\nzx-3) {$\delta$};
    \node at (3.5,0.25) {$C$};
    
  \end{tikzpicture}
  \caption{The structure of a $7 \times 4$ block of base grid cells on the distance layer.}
  \label{fig:dist-gadget}
\end{figure}

Consider a $7 \times 4$ block $B$ of base grid cells such that the central grid cell of the bottom row, say $C$, contains a nonzero symbol (the black dot marked $\alpha$ in Figure~\ref{fig:dist-gadget}).
We will measure the max distance from this nonzero symbol to the closest other nonzero symbol in the block.
Each such block will have its own set of signals that do not interact with those of other blocks; this is achieved by splitting the alphabet of the distance layer into $7 \cdot 4 = 28$ sublayers, one for each $7 \times 4$ block that a given base grid cell belongs to.

In the block $B$ we place a vertical signal called the \emph{distance signal} of $C$ (the dotted line in Figure~\ref{fig:dist-gadget}).
The distance between the west border of $C$ and the distance signal (to its east) encodes a single integer $1 \leq q \leq 2n+1$.
With additional horizontal and diagonal signals, we ensure that every cell of the base grid encodes the same integer.
We will ensure that $q \leq d$, where $d$ is the actual minimum max distance between two nonzero symbols on the base alphabet layer.

We place a rectangular pattern $R$ of size $(2p+1) \times p$ (the gray rectangle in Figure~\ref{fig:dist-gadget}; the shape is enforced by diagonal signals) in the block so that the nonzero symbol of $C$ lies at the center of the bottom row of $R$.
For now, the height $p \geq 1$ can be arbitrary as long as $R$ fits inside the block $B$, but we will constrain it in the following paragraph.
The sides of the rectangle continue to the borders of the block, dividing it into 9 regions with distinct colors.

We require that the interior of $R$ contains no nonzero symbols, and ensure that some coordinate on the west, north or east border of $R$ either contains another nonzero symbol or lies on the border of the block $B$.
This is achieved by emitting a signal from the southwest corner of $R$ that travels clockwise around its border and stops when either condition holds (the arrow in Figure~\ref{fig:dist-gadget}).
If the signal reaches the southeast corner of $R$, a tiling error is produced.
The information about which condition stopped the signal is propagated throughout the block $B$ as an additional background color.

In the case that the signal is stopped by a nonzero symbol, we transmit the height $p$ of the rectangle $R$ -- which is exactly the max distance between the two nonzero symbols -- next to the distance signal of $C$ in order to perform a comparison.
We do this by drawing a parallelogram $\alpha \beta \gamma \delta$ (the dark gray region in Figure~\ref{fig:dist-gadget}), where $\alpha \beta$ codincides with the right half of the south border of $R$, $\delta$ lies on the same vertical line as the west border of $C$, and $\alpha \delta$ and $\beta \gamma$ have slope $-1$.
Again, the sides of the parallelogram continue to the borders of the block, defining 9 regions with distinct colors, all overlaid on top of the already defined colors.
Finally, we require that the distance signal of $C$ crosses the segment $\gamma \delta$, which ensures $q \leq p$.

\subsection{Verifying the constraint: the computation layer}

We now describe the computation layer.
It consists of finite and infinite computation cones (as in Example~\ref{ex:counter-machines}) that host simulated computations of a counter machine $M$, which has access to the contents of the other layers.
Our goal is to ensure that every configuration containing the base grid and at least one nonzero symbol also contains at least one infinite computation cone.

The vertical west border of each computation cone must lie on a vertical line of the base grid.
We place the vertex of a computation cone at the southwest corner of every base grid cell that contains a nonzero symbol on the base alphabet layer.
If the diagonal east border of a computation cone $C_1$ reaches the vertical west border of another cone $C_2$, a signal is emitted from that position directly to the west which truncates the cone $C_1$.
Its computation does not continue.
If a computation cone contains the southwest corner of an active base grid cell, then two signals are emitted from that corner to the west and east which truncate the cone.
In this case, a new cone is created at the intersection of the westward signal and the vertical border of the original cone.

The machine $M$ has access to the base grid, the binary counter layer and the distance layer.
In particular, it can read the value $b = b_C$ from each base grid cells $C$ on the west border of its cone -- all of which have the same value, since a computation cone cannot contain active grid cells -- and the value $q$ stored in the distance signal.
Let $g : \N^2 \to \N$ be a computable upper approximation to $f$, that is, $f(n) = \min_{k \in \N} g(n,k)$ for all $n \in \N$.
The machine $M$ enumerates all $k \in \N$ and checks that $b \leq g(q,k)$, producing a tiling error if not.
Since $q \leq d$ and $f$ is nondecreasing, this implies $N \leq b \leq \min_{k \in \N} g(q,k) = f(q) \leq f(d)$.

\subsection{The SFT $X$ projects into $G_2(A, f)$}

We assume $x \in X$ and claim that the projection of $x$ is in $G_2(A, f)$.
If $x$ is over $A_n$ for some $n \leq M$, this is clear.
Otherwise, if the base alphabet layer of $x$ contains at most one nonzero symbol, this is also clear.
Let thus $d$ be the minimum max distance between two nonzero symbols on the base alphabet layer.
The small grid has some width $n \leq d$, and the number $q$ stored in the distance signals verifies $1 \leq q \leq 2n+1$.

We claim that $q \leq d$.
Consider two nonzero symbols at $(i,j), (i',j') \in \Z^2$ with distance exactly $d$ satisfying $j \leq j'$.
Let $C$ be the small grid cell containing $(i,j)$, and let $B$ be the $4 \times 7$ block of small grid cells containing $C$ at the middle of its bottom row.
Inside $B$ on the distance layer, the nonzero symbol at $(i,j)$ produces a $(2p+1) \times p$ rectangle $R$.
By construction, the interior of $R$ contains no nonzero symbols, but the west, north or east border of $R$ either contains another nonzero symbol or touches the border of $B$.
In the first case, the nonzero symbol at $(i',j')$ lies on the border of $R$, since it is the closest nonzero symbol to $(i,j)$ and $j \leq j'$.
Then the other signals of the distance layer enforce $q \leq d$.
If $R$ borders the $7 \times 4$ block $B$ instead, then $p \geq 3n$, and since $(i',j')$ is not inside $R$, the max distance between $(i,j)$ and $(i',j')$ is $d \geq p \geq 3n > 2n+1 \geq q$.
In both cases the claim holds.

Let $(i,j) \in \Z^2$ be the lexicographically largest coordinate of a nonzero symbol.
We claim that some coordinate $(i',j')$ such that $i \leq i'$ and $j'$ is above every row with active base grid cells is the vertex of an infinite computation cone.
First, the computation layer contains a computation cone $C$ whose vertex is at $(i,j)$.
Suppose $C$ is finite.
If its north border intersects either an active base grid cell, then the northwest corner of $C$ is the vertex of another computation cone.
We replace $C$ by this new cone, which has a lexicographically larger vertex, and repeat the analysis.
Otherwise, $C$ intersects another computation cone to the east.
Since every base grid cell to the east of $(i,j)$ on the same row is active, this new cone has a vertex (either on the same row as $C$ or above it) that is lexicographically larger than that of $C$.
We replace $C$ by the new cone and repeat the analysis.

See Figure~\ref{fig:process} for an example of this process.
The black dots are nonzero coordinates, which produce active base grid cells represented by dashed lines.
The gray triangles are computation cones; the bottom right rectangle is an infinite cone shaped like a quarter-plane.
The dark gray triangles are the ones that the process considers in this particular case.

\begin{figure}[htp]
  \centering
  \begin{tikzpicture}

    \pgfmathsetmacro{\maxx}{6}
    \pgfmathsetmacro{\maxy}{6}

    \fill [black!20] (3.5,-1) rectangle (\maxx,0);
    \draw (3.5,-1) |- (\maxx,0);

    \fill [black!20] (0,5) -- ++(2,2) -- ++(-2,0);
    \draw (0,7) -- (0,5) -- ++(2,2);
    \fill [black!40] (3.5,5) -- ++(2,2) -- ++(-2,0);
    \draw (3.5,7) -- (3.5,5) -- ++(2,2);

    \draw [fill=black!40] (3.5,3) -- ++(2,2) -- ++(-2,0) -- cycle;
    \draw [fill=black!20] (0.5,1.5) -- (0.5,0) -- ++(1.5,1.5) -- ++(-1.5,0);
    \draw [fill=black!40] (2,4.5) -- (2,1) -- ++(1,1) -- ++(-1,0) -- ++(1,1) -- ++(-1,0) -- ++(1.5,1.5) -- cycle;
    \draw [fill=black!20] (-0.5,3) -- ++(2,2) -- ++(-2,0) -- cycle;
    \draw [fill=black!20] (-2,2) -- ++(1.5,1.5) -- ++(-1.5,0) -- cycle;
    \draw [fill=black!20] (-0.5,5) -- ++(0.5,0.5) -- ++(-0.5,0) -- cycle;

    \foreach \x/\y in {0.5/0, 2/1, -2/2, -0.5/3, 0/5}{
      \fill (\x,\y) circle (0.075cm);
      \draw [dashed] (\x,\y) -- (\maxx,\y);
    }
    
    \draw [fill=black!20] (3.5,0) -- ++(1,1) -- ++(-1,0) -- ++(1,1) -- ++(-1,0) -- ++(1,1) -- ++(-1,0) -- cycle;

    \node [below] at (2,1) {$(i,j)$};
    \node [above left] at (3.5,5) {$(i',j')$};
    
  \end{tikzpicture}
  \caption{The process for finding an infinite computation cone.}
  \label{fig:process}
\end{figure}

We claim that the above process eventually terminates.
Namely, the second condition can occur only once.
Since there are no nonzero coordinates to the east of $(i,j)$, the vertical border of the new cone must continue indefinitely to the south (possibly as the vertical borders of yet other cones).
When it has crossed the finitely many rows containing active base grid cells, the remaining half-line $L$ is the border of an infinite computation cone shaped like a southeast quarter-plane.
If the second condition recurred, there would be another such half-line to the east of $L$, which is impossible.
The first condition occurs exactly as many times as there are rows above $j$ with active base grid cells, which is less than $2^k$.
Hence the existence of $(i',j')$ is proved.

The number $b$ received by the machine $M$ of the $(i',j')$-based cone satisfies $b \geq N$, as it lies above every row with active base grid cells and to the east of every nonzero coordinate.
The machine checks that $b \leq g(q,k)$ holds for all $k \in \N$.
Since $f$ is nondecreasing, we have $N \leq b \leq \min_{k \in \N} g(q,k) = f(q) \leq f(d)$, and hence the base alphabet layer is in $G_2(A,f)$.

\subsection{The SFT $X$ projects onto $G_2(A, f)$}

Let $y \in G_2(A,f)$ be arbitrary.
We show that there exists $x \in X$ that projects to $y$.
If $y$ contains at most one nonzero symbol, or the minimum max distance between two nonzero symbols is $d \leq M$, we can pick a suitable $x \in A_d$.

Suppose then that $d > M$, so that $f(d) < 2^{\sqrt{d}}$.
We choose $n = \lfloor d/2 \rfloor$ and pick an arbitrary width-$n$ grid as the base grid, which produces no adjacent base grid cells with nonzero symbols.

On the binary counter layer we can choose $k \geq \sqrt{5n}$ and pick the initial value of the counter of each row and column to be $0$.
This produces a valid configuration on the layer, and the maximum counter value among all base grid cells is exactly $N$.

Consider now the distance layer.
The common value $q$ of the distance counters must satisfy $1 \leq q \leq 2n+1$, and we claim that we can choose $q = d$.
Note that since $n = \lfloor d/2 \rfloor$, we have $d \leq 2n+1$.

Let $(i,j) \in \Z^2$ be the position of a nonzero symbol.
If the rectangle $R$ defined by $(i,j)$ on the distance layer contains another nonzero symbol on its border, then its height is equal to the max distance $d' \geq d$ between these symbols.
The other signals of the distance layer then perform a comparison that implements the restriction $q \leq d'$.
On the other hand, if $R$ touches the borders of the $7 \times 4$ block defined by $(i,j)$, then no comparison is performed, and no new restrictions are placed on $q$.
Thus we can choose the distance layer so that $q = d$.

Finally, the computation layer is determined by the other layers.
The machine $M$ in any given computation cone receives $q = d$ from the distance counters and a number $b \leq N$ from the binary counters.
For each $k \in \N$ we have $b \leq N \leq f(d) \leq g(d, k) = g(q, k)$, so the simulated machines produce no tiling errors.

\subsection{The SFT $X$ is countable}

The subshifts $X \cap A_n^{\Z^2}$ have already been shown to be countable.
Consider then a configuration $x \in X \setminus \bigcup_{n \leq M} A_n^{\Z^2}$.
The base alphabet layer is in $G_2(A, f)$, which is countable.
If $x$ has a base grid of some finite width $n$, then there are countably many choices for the binary counter layer -- the value of $k$, and the initial counter values of each row and column, both of which sum to less than $2^k$ -- and finitely many choices for the distance layer -- the value $q$ of the distance counter.
The computation layer is determined by the other layers apart from the possible existence of at most two infinite computation cones without south vertices, and a bounded number of counters and/or signals inside them, all of which constitute a countable number of choices.

Suppose then that $x$ does not have a base grid of finite width.
Then it does not have finite secondary grids on the binary counter layer either, and its base alphabet layer has at most four nonzero symbols.
Since we defined all signals within base grid cells (and $7 \times 4$ block thereof) as borders between two regions of distinct colors, a case analysis reveals that there are countably many ways to place the signals of the binary counter layer and the distance layer inside an infinite base grid cell.
As in the case where $x$ has a finite-width base grid, the computation layer likewise has countably many valid configurations.
This finishes the proof of Theorem~\ref{thm:main2}.

\section{Open problems}

We have shown in Theorem~\ref{thm:main} that if an effectively closed shift space $X \subset A^\Z$ has a computable gap function, then it satisfies the countable cover conditions if and only if its lift is countably covered.
Example~\ref{ex:not-computable-gap} shows that the countable cover conditions do not imply the computability of the gap function, although by Proposition~\ref{prop:lower-sc} they imply lower semicomputability.

\begin{question}
  Do the countable cover conditions characterize the property of the lift being countably covered among all $\Z$-shift spaces that admit a gap function?
\end{question}

We do not even know whether there exist shift spaces with countably covered lifts and uncomputable gap functions.
More concretely, we ask the following.

\begin{question}
  Does the lift of the SFT of Example~\ref{ex:not-computable-gap} admit a countable SFT cover?
\end{question}

As for two-dimensional gap width shifts, the question of their soficness remains open, although we dare pose a conjecture.

\begin{conjecture}
  \label{conj:sofic}
  If $f : \N \to \N$ is upper semicomputable, then $G_2(A,f)$ is a sofic shift.
\end{conjecture}

As for Theorem~\ref{thm:main2}, the upper bound $f(n) < 2^{\sqrt{n}}$ is not optimal, but we do not know whether it can be removed completely.
A negative answer (assuming Conjecture~\ref{conj:sofic} is true) would require a completely new technique for proving the nonexistence of countable covers.

\begin{question}
  If $f : \N \to \N$ is upper semicomputable, is $G_2(A,f)$ a countably covered sofic shift?
\end{question}

\bibliographystyle{plain}
\bibliography{gapcolbib}

\end{document}